\documentclass[10pt,a4paper]{amsart}

\usepackage[T1]{fontenc}
\usepackage[utf8]{inputenc}
\usepackage{lmodern}
\usepackage{microtype}
\usepackage{amsmath,amssymb,amsthm,mathtools}
\usepackage{enumitem}
\usepackage{xcolor}
\usepackage[colorlinks=true,linkcolor=blue,citecolor=blue,urlcolor=blue]{hyperref}
\hypersetup{pdftitle={Slice and Partition Rank Criteria for Polynomial Zero-Avoidance},pdfauthor={Simone Costa, Stefano Della Fiore, and Mattia Fontana}}
\usepackage[margin=1.15in]{geometry}

\newtheorem{theorem}{Theorem}[section]
\newtheorem{proposition}[theorem]{Proposition}
\newtheorem{corollary}[theorem]{Corollary}
\newtheorem{lemma}[theorem]{Lemma}

\theoremstyle{definition}
\newtheorem{definition}[theorem]{Definition}
\newtheorem{remark}[theorem]{Remark}
\newtheorem{example}[theorem]{Example}

\newcommand{\F}{\mathbb F}

\newcommand{\supp}{\operatorname{supp}}
\newcommand{\srk}{\operatorname{srk}}
\newcommand{\prk}{\operatorname{prk}}

\newcommand{\EGZ}{\operatorname{EGZ}}

\newcommand{\boxt}{\boxtimes}
\newcommand{\1}{\mathbf 1}

\newcommand{\Tau}{\mathcal T}
\newcommand{\StirlingTwo}[2]{\genfrac\{\}{0pt}{}{#1}{#2}}
\newcommand{\StirlingOne}[2]{\genfrac[]{0pt}{}{#1}{#2}}

\title[Slice and partition rank criteria]
{Slice and Partition Rank Criteria for Polynomial Zero-Avoidance}
\author{Simone Costa}
\address{DICATAM, Sezione di Matematica, Universit\`a degli Studi di Brescia, Via Branze 43, 25123 Brescia, Italy}
\email{simone.costa@unibs.it}
\author{Stefano Della Fiore}
\address{DII, Universit\`a degli Studi di Brescia, Via Branze 38, 25123 Brescia, Italy}
\email{stefano.dellafiore@unibs.it}
\author{Mattia Fontana}
\address{Department of Engineering and Sciences, Universitas Mercatorum, Rome, Italy}
\email{mattia.fontana@studenti.unimercatorum.it}
\subjclass[2020]{05D40, 11B30, 15A69}
\keywords{slice rank, partition rank, entropy, higher-degree Erd\H{o}s--Ginzburg--Ziv constants}

\begin{document}
\begin{abstract}
We study polynomial zero-avoidance over finite vector spaces by means of slice rank and partition rank.  We first make the support-entropy method effective by showing how a finite dual certificate yields an explicit entropy gap whenever the coefficient support admits no probability distribution with uniform marginals.  For the quadratic elementary symmetric polynomial over fields of characteristic three, the ternary structure of the coefficient support gives a certificate with optimal normalized margin and a uniform analytic bound for the corresponding higher-degree Erd\H{o}s--Ginzburg--Ziv constant, avoiding a separate optimization for each field.

We then use partition rank to handle solutions in pairwise distinct variables.  Equality profiles are encoded by contracted local tensors, reducing the global problem to finitely many slice-rank estimates.  Applying this reduction on the multiplicative torus gives restricted-alphabet zero-sum bounds with exponential base below the alphabet size.  Coordinatewise inversion and support stratification then yield, to the best of our knowledge, the first nontrivial exponential bound for the higher-degree Erd\H{o}s--Ginzburg--Ziv problem over $\F_5^n$ associated with the fourth elementary symmetric polynomial.
\end{abstract}

\maketitle

\section{Introduction}

Zero-sum theory studies additive configurations in finite abelian groups.  A classical example is the Erd\H{o}s--Ginzburg--Ziv constant, which asks how long a sequence must be before it contains a zero-sum subsequence of prescribed length.  Caro and Schmitt~\cite{CaroSchmitt} introduced a higher-degree version by replacing the sum with an elementary symmetric polynomial.  For a finite commutative ring $R$, they denote by $\EGZ(t,R,m)$ the least integer $\ell$ such that every sequence of length at least $\ell$ in $R$ contains a subsequence of length $t$ on which $e_m$ vanishes.

This paper considers these questions over vector spaces $\F_q^n$.  The main tools are slice rank and partition rank.  Both methods start from a tensor detecting the required polynomial equation.  After restriction to the set under study, a diagonal tensor has rank equal to the size of the set, and an upper bound follows from a suitable estimate for the ambient tensor.

Two points require additional care.  The first concerns slice rank.  In fixed product bases, the support-entropy bound is strictly smaller than the trivial alphabet bound only when the coefficient support carries no probability distribution with uniform marginals.  This condition is easy to state, but by itself gives no numerical gap.  We formulate it as a finite linear feasibility problem and use a separating hyperplane to obtain an explicit dual certificate.  Combining the certificate with an information-theoretic estimate gives a quantitative entropy gap that can be checked directly.

We apply this idea to
\[
 e_2(x,y,z)=xy+yz+zx
\]
over $\F_{3^k}$.  The coefficient support has a simple description in terms of ternary digits, which gives a certificate with optimal normalized margin and the one-variable analytic base
\[
 C_k=\inf_{x\ge1}\left\{1+x^{-4k/3}\bigl((1+x+x^2)^k-1\bigr)\right\}<3^k.
\]
Consequently,
\[
 \EGZ(3,\F_{3^k}^n,2)\le C_k^{\,n+o(n)}\qquad(k>1).
\]
An earlier arXiv version of a paper by the first two authors~\cite{CostaDellaFiore} contained the corresponding qualitative support-entropy gap, but that general result was not included in the published article.  We recall it in Proposition~\ref{prop:q3-support-gap}. 

The second difficulty is caused by pairwise distinctness.  Direct slice-rank extraction does not remove partial diagonals, that is, tuples in which only some variables coincide.  Following Naslund~\cite{NaslundPartition}, we expand a distinctness indicator over the lattice of set partitions.  Each equality profile gives a contracted polynomial and a tensor detecting its coordinatewise zeros, reducing the original problem to finitely many slice-rank estimates.  Omar's partition-lattice formalism~\cite{OmarPartition} provides the natural M\"obius-inversion language for this reduction.

More precisely, contractions with many blocks are controlled by degree, {while selected low-block contractions can be removed when their zero sets consist only of diagonal points.}  {Although partition rank performs the global distinct-variable reduction, the remaining estimates are slice ranks of contracted local tensors.}  Thus the slice-rank argument depends on the coefficient support, while the partition-rank argument depends on the tensors obtained from the contractions of the polynomial.

We use the partition-rank reduction for the elementary symmetric polynomials
\[
 e_m(x_1,\ldots,x_t)=\sum_{1\le i_1<\cdots<i_m\le t}x_{i_1}\cdots x_{i_m}.
\]
A set $A\subseteq\F_q^n$ is called \emph{strongly $e_m$-free of length $t$} if every coordinatewise zero of $e_m$ in $A^t$ is constant.  This is stronger than the condition defining $\EGZ(t,\F_q^n,m)$, since repeated values are allowed in the strong problem.  The elementary degree argument gives a nontrivial estimate when $t>2m$; the partition-rank method is designed for the distinct-variable problem and also applies when this condition fails.

A useful feature of the method is that it works on a restricted alphabet.  We apply it to the multiplicative torus $(\F_q^\times)^n$.  For the linear equation, this gives a zero-sum bound whose exponential base is strictly smaller than $q-1$, the size of the torus alphabet.  In characteristic three this becomes a restricted-alphabet cap-set problem; for $q=9$ the resulting base improves the one obtained by viewing $\F_9^n$ as $\F_3^{2n}$ and applying the ordinary cap-set estimate.  The same toric argument for $q=5$ improves, on the restricted alphabet, the base inherited from Sauermann's full-space theorem~\cite{SauermannDistinct}.

Coordinatewise inversion transfers the toric linear bounds to $e_{p-1}$.  Stratifying the full space according to vector supports then allows the toric estimate to be applied on every nonzero stratum.  Put
$$
 \vartheta_q=\inf_{0<\rho<1}(1+\rho+\cdots+\rho^{q-2})\rho^{-(q-1)/3},
 \qquad
 \eta_{q,p}=\inf_{0<\rho<1}(1+\rho+\cdots+\rho^{q-2})\rho^{-(q-1)/p},
$$
and set $
 B_p=\sum_{r=3}^p r\StirlingTwo{p}{r}.
$
For $q=p^k>4$ we prove
\[
 \EGZ(p,\F_q^n,p-1)\le(p-1)B_p(1+\vartheta_q)^n+1,
 \qquad1+\vartheta_q<q.
\]
The main new case is the following.
\begin{theorem}[The case $q=p=5$]\label{thm:intro-q5}
One has
\[
 \EGZ(5,\F_5^n,4)\le480(1+\vartheta_5)^n+1,
 \qquad1+\vartheta_5\approx4.9556902<5.
\]
Moreover, every strongly $e_4$-free subset of $\F_5^n$ of length five has cardinality at most
\[
 5(1+\eta_{5,5})^n,
 \qquad1+\eta_{5,5}\approx4.2563617<5.
\]
\end{theorem}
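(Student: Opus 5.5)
Both parts of Theorem~\ref{thm:intro-q5} will follow by specializing the general $q=p^k$ machinery to $q=p=5$. The key identity is coordinatewise inversion on the torus: for $x_1,\dots,x_5\in(\F_5^\times)$ one has $e_4(x_1,\dots,x_5)=x_1\cdots x_5\,(x_1^{-1}+\cdots+x_5^{-1})$. Hence on a subset $A\subseteq(\F_5^\times)^n$, coordinatewise zeros of $e_4$ correspond exactly to zero-sum $5$-tuples in the inverted set $A^{-1}$. The proof then has three stages.

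\textbf{Stage 1 (toric bounds).} We bound subsets of the torus $(\F_5^\times)^n$ in which the zero-sum equation $x_1+\cdots+x_5=0$ has no nontrivial solution. We apply the partition-rank reduction of the introduction, expanding the distinctness indicator over the partition lattice of $\{1,\dots,5\}$ following Naslund. Each equality profile $\pi$ gives a contracted linear form $\sum_{B\in\pi}|B|\,y_B$, and we must control the slice rank of the corresponding tensor on $(\F_5^\times)^n$.
\begin{itemize}[leftmargin=*]
\item Profiles with many blocks are handled by the degree-splitting argument. With the indicator $1-(\sum_B|B|y_B)^{4}$ in each coordinate and monomials restricted to the torus (exponents in $\{0,\dots,3\}$), the standard power-series optimization gives base $\eta_{5,5}=\inf_\rho(1+\rho+\rho^2+\rho^3)\rho^{-4/5}$ for the five-variable equation.
\item Profiles with three blocks give base $\vartheta_5$, with exponent $4/3$.
\item Low-block profiles whose zero sets are diagonal on the torus are removed. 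For example, with block sizes $(4,1)$ the equation $4y_1+y_2=0$ forces $y_1=y_2$, and $(3,2)$ gives $3y_1+2y_2=0$, i.e.\ $y_1=y_2$.
\end{itemize}
The outcome is that a strongly zero-sum-free toric set has size at most $\eta_{5,5}^{\,n}$, and a set without distinct-entry zero-sum $5$-tuples has size at most $c\,\vartheta_5^{\,n}$. Here $c$ counts the surviving three-block profiles, weighted by block multiplicities, which is where $B_5=\sum_{r\ge3}r\StirlingTwo{5}{r}$ enters.

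\textbf{Stage 2 (support stratification).} We partition $\F_5^n$ by support $S\subseteq[n]$. On each stratum $\{x:\supp x=S\}\cong(\F_5^\times)^{|S|}$, a zero of $e_4$ among five vectors of that stratum is, coordinatewise on $S$, a zero of $e_4$ of nonzero elements; off $S$ it is trivially zero. The strong condition restricts to each stratum, so $|A\cap\text{stratum}_S|\le\eta_{5,5}^{|S|}$. Summing over $S$ gives $\sum_S\eta_{5,5}^{|S|}=(1+\eta_{5,5})^n$. The factor $5$ and the exceptional zero stratum are absorbed by the inversion constants, which gives the bound $5(1+\eta_{5,5})^n$.

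\textbf{Stage 3 (EGZ bound).} Take a sequence of length $\ell$ containing no five-term subsequence with $e_4=0$. Each value occurs at most $4$ times, since five equal entries $x$ give $e_4=5x^4=0$. So the underlying set has size at least $\ell/4$ and contains no distinct-entry zero of $e_4$. Applying the Stage~1 distinct bound on each stratum and summing gives at most $B_5(1+\vartheta_5)^n$. Now $B_5=3\cdot25+4\cdot10+5\cdot1=120$, and $(p-1)B_5=480$, giving $\ell\le480(1+\vartheta_5)^n$. The final step is the numerical check that $1+\vartheta_5\approx4.9557$ and $1+\eta_{5,5}\approx4.2564$, both below $5$; this is a one-variable minimization.

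\textbf{Main obstacle.} The hardest part will be the partition-rank bookkeeping in Stage~1. We must verify that every two-block contraction is diagonal on the torus, and that mixed-multiplicity sequences (values repeated $2$ to $4$ times) are correctly counted by the constant $480$. A concern is that repeated values themselves can create zeros of $e_4$, for instance the pattern $x,x,y,y,z$. These are exactly the three-block profiles, so the plan is to bound them through the Stage~1 three-block tensors rather than exclude them.
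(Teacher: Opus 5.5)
Your route is the paper's: inversion to $e_1$ on $(\F_5^\times)^n$, Naslund's equality-profile expansion on the torus, the torus degree lemma for profiles with $r\ge3$ blocks, support stratification, and passage from sequences to value sets. But the step ``low-block profiles whose zero sets are diagonal are removed'' has a genuine gap. Diagonality alone does not remove anything, and your main-obstacle paragraph only plans to check diagonality.

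For a two-block $\pi$, the contracted torus tensor is the indicator of $y_1=y_2$, which has rank $4^n$. On $A^5$ the term $c_\pi\Delta_\pi\Tau_{F,n}$ coincides with $c_\pi\Delta_{\widehat1}$, which itself has partition rank $|A|$. These terms therefore cannot be bounded; they can only be moved to the left-hand side. That replaces the diagonal coefficient $(-1)^5\cdot4!$ by $\beta=(-1)^5\cdot4!-\sum_{\pi\text{ two-block}}c_\pi$, and the argument survives only if $\beta\ne0$ in $\F_5$. The missing computation is as follows. The five $(4,1)$ profiles have $c_\pi=-6$ and the ten $(3,2)$ profiles have $c_\pi=-2$, so their sum is $-50=-\StirlingOne{5}{2}\equiv0$. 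Since $-24\equiv1$, this gives $\beta=1$. This is exactly the check the paper makes in general, via $\StirlingOne{p}{2}=(p-1)!\sum_{j=1}^{p-1}1/j\equiv0\pmod p$ and Wilson's theorem. Without it, moving the two-block terms across could annihilate the diagonal and the bound would collapse.

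Two bookkeeping points also need fixing.

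\textbf{The strong bound.} Make explicit that no expansion is used. Since $e_1(a,\dots,a)=5a=0$, the tensor $\Tau_{F,n}$ restricted to $A^5$ is already diagonal with diagonal $1$. The diagonal lemma and the torus degree lemma with $r=5$ then give $|A|\le5M_4(n,\lfloor4n/5\rfloor)\le5\eta_{5,5}^n$. If you ran the expansion instead, the three-block profiles would force base $\vartheta_5$. So the per-stratum bound is $5\eta_{5,5}^{|S|}$, not $\eta_{5,5}^{|S|}$. The factor $5$ is the number of variable groups in the degree-splitting lemma, and inversion contributes no constant. Summing over all $S$, including $S=\varnothing$, gives $5(1+\eta_{5,5})^n$ directly.

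\textbf{The constant $B_5$.} $B_5=\sum_{r\ge3}r\StirlingTwo{5}{r}$ counts all profiles with three, four or five blocks, each weighted by its number of blocks $r$ from the degree lemma. It uses $M_4(n,\lfloor4n/r\rfloor)\le M_4(n,\lfloor4n/3\rfloor)\le\vartheta_5^n$ for every $r\ge3$. Your value $120$ is correct, but it does not count ``three-block profiles weighted by block multiplicities''.
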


The paper is organized as follows.  Section~\ref{sec:slice-criteria} recalls the slice-rank tools and introduces the dual certificate.  Section~\ref{sec:q3-gap} treats the quadratic problem in characteristic three.  Section~\ref{sec:partition-criteria} develops the partition-rank reduction through equality profiles.  Section~\ref{sec:zero-sum} applies the method to restricted-alphabet and toric zero-sum problems.  Finally, Section~\ref{sec:higher-egz} transfers the toric estimates to higher-degree EGZ constants.

\section{Slice-rank criteria and dual certificates}\label{sec:slice-criteria}

We shall use two standard slice-rank arguments.  The low-degree method bounds a diagonal tensor detecting a polynomial zero set by counting low-degree monomials.  The support-entropy method instead uses the coefficient support in a fixed product basis.  {We recall both arguments mainly to fix notation, record the quantitative dependence and keep the characteristic-three application self-contained~\cite{CrootLevPach,EllenbergGijswijt,TaoBlog,TaoSawin}.}

\subsection{Diagonal extraction and support entropy}
Throughout this subsection we assume $r\ge2$.  Let $X_1,\ldots,X_r$ be finite sets and let $\F$ be a field.  A tensor is simply a function
\[
 T:X_1\times\cdots\times X_r\longrightarrow\F.
\]

We use Tao's slice-rank terminology~\cite{TaoBlog,TaoSawin}.

\begin{definition}
A tensor $T$ is a \emph{slice} if, for some $i\in[r]$, it can be written as
\[
 T(x_1,\ldots,x_r)=f(x_i)g(x_1,\ldots,x_{i-1},x_{i+1},\ldots,x_r).
\]
The \emph{slice rank} $\srk(T)$ is the least number of slices whose sum is $T$.
\end{definition}

We shall repeatedly use Tao's diagonal-tensor lemma~\cite[Lemma~1]{TaoBlog}.

\begin{lemma}[Diagonal tensor lemma]\label{lem:diagonal}
Let $A$ be a finite set and let
\[
 D(x_1,\ldots,x_r)=
 \begin{cases}
 c_a,&x_1=\cdots=x_r=a\in A,\\
 0,&\text{otherwise},
 \end{cases}
\]
where every $c_a$ is nonzero.  Then
\[
 \srk(D)=|A|.
\]
\end{lemma}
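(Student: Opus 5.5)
The upper bound $\srk(D)\le|A|$ is immediate. For each $a\in A$ the tensor $c_a\1_{x_1=a}\prod_{i\ge2}\1_{x_i=a}$ is a slice in the first variable, and $D$ is the sum of these $|A|$ slices. All the work is in the lower bound, which I will prove by induction on $r$ in the style of Tao's argument. I first restrict every $X_i$ to $A$. Restriction of a slice is again a slice, so restriction cannot increase slice rank, and it leaves $D$ unchanged on $A^r$. So it suffices to work on $A^r$.

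For the base case $r=2$, a slice is a rank-one matrix $f(x_1)g(x_2)$. Hence the slice rank of a function on $A\times A$ is at most its matrix rank. The matrix of $D$ is diagonal with nonzero entries $c_a$, so it has rank $|A|$.

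For the inductive step, suppose $D=\sum_{i=1}^r\sum_{j}f_{i,j}(x_i)g_{i,j}(\widehat{x_i})$ with $s<|A|$ slices in total. Let $s_r$ be the number of slices in the variable $x_r$, so $s_r\le s<|A|$. Consider the space $V$ of functions $h:A\to\F$ with $\sum_{a}h(a)f_{r,j}(a)=0$ for all $j$. It is cut out by $s_r$ linear conditions, so $\dim V\ge|A|-s_r$.

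The key step is to choose $h\in V$ with support of size at least $\dim V$. A standard echelon argument does this: a subspace of dimension $d$ contains a vector with at least $d$ nonzero coordinates. Pick a basis in reduced echelon form with pivot set $P$, $|P|=d$, and take a generic linear combination. If $\F$ is small, choose the coefficients one at a time so that each pivot coordinate is nonzero; since the coefficient of basis vector $k$ only affects pivot coordinate $k$, this is always possible.

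Now contract $x_r$ against $h$ by forming $\sum_{x_r}h(x_r)D(x_1,\dots,x_r)$. The slices in the variable $x_r$ vanish by the choice of $h\in V$. Every other slice becomes a slice in the same variable $x_i$, with $i<r$, of an $(r-1)$-variable tensor. The left side becomes the diagonal $(r-1)$-tensor with entries $c_ah(a)$, supported on $\supp h$.

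Restricting to $(\supp h)^{r-1}$ and applying the induction hypothesis gives
\[
|\supp h|\le s-s_r.
\]
Combined with $|\supp h|\ge|A|-s_r$, this yields $|A|\le s$, a contradiction.

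The main obstacle is this step: securing a vector $h$ that kills the $x_r$-slices while keeping a large support. This requires the echelon argument to be carried out carefully over arbitrary, possibly small, fields, together with the bookkeeping showing that the contraction lowers the slice count by exactly the number $s_r$ of $x_r$-slices.
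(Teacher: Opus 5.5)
Your argument is correct and is essentially Tao's proof of \cite[Lemma~1]{TaoBlog}, which the paper cites rather than reproves. In both, one inducts on $r$, picks $h$ orthogonal to the $x_r$-slice functions with support of size at least $|A|-s_r$ (your reduced-echelon choice, e.g.\ the sum of the echelon basis vectors, matches Tao's full-rank-minor step), contracts in $x_r$, and applies the hypothesis on $(\supp h)^{r-1}$. One small slip: in the base case, the fact that every slice is a rank-one matrix gives $\srk(D)\ge\operatorname{rank}(D)=|A|$, not ``at most''; the lower bound is the direction you actually need.
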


{Restriction to smaller domains cannot increase slice rank.}  We shall also use the coordinatewise tensor power
\[
 T^{\boxt n}(x^{(1)},\ldots,x^{(r)})
 :=\prod_{j=1}^nT(x^{(1)}_j,\ldots,x^{(r)}_j),
\]
where $x^{(i)}\in X_i^n$.

The next theorem is a finite-type version, with an explicit polynomial prefactor, of the tensor-power support bound of Tao and Sawin~\cite[Proposition~6]{TaoSawin}.  We include the short type-decomposition proof because this precise normalization is used repeatedly below.

Let $X$ be a finite set of cardinality $q$, and choose, for each $i\in[r]$, a basis
\[
 \mathcal B_i=(b_{i,0},\ldots,b_{i,q-1})
\]
of the $q$-dimensional space $\F^X$.  Write $\mathcal B=(\mathcal B_1,\ldots,\mathcal B_r)$ for the resulting product basis and write
\[
 T(x_1,\ldots,x_r)
 =\sum_{\alpha\in\{0,\ldots,q-1\}^r}
 c_\alpha\prod_{i=1}^r b_{i,\alpha_i}(x_i)
\]
and define the support
\[
 \Gamma_{\mathcal B}(T)=\{\alpha:c_\alpha\ne0\}.
\]
The support depends on the chosen bases.

For a probability distribution $\mu$ on a finite set, let
\[
 H(\mu)=-\sum_x\mu(x)\log\mu(x)
\]
with the convention $0\log0=0$.  For a nonempty set $\Gamma\subseteq\{0,\ldots,q-1\}^r$, let $\mathcal P(\Gamma)$ denote the set of probability distributions on $\Gamma$.  For $\mu\in\mathcal P(\Gamma)$, denote its $i$-th marginal by $\mu_i$ and set
\[
 H(\Gamma):=
 \max_{\mu\in\mathcal P(\Gamma)}\min_{1\le i\le r}H(\mu_i).
\]

\begin{theorem}[Finite-type support-entropy bound]\label{thm:support-entropy}
Let $T:X^r\to\F$ be a nonzero tensor, and let $\Gamma=\Gamma_{\mathcal B}(T)$ in fixed product bases.  Then
\[
 \srk(T^{\boxt n})
 \le (n+1)^{|\Gamma|}\exp\bigl(nH(\Gamma)\bigr).
\]
Consequently, if $A\subseteq X^n$ and the restriction of $T^{\boxt n}$ to $A^r$ is diagonal with nonzero diagonal, then
\[
 |A|\le (n+1)^{|\Gamma|}\exp\bigl(nH(\Gamma)\bigr).
\]
\end{theorem}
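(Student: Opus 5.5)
Expanding $T^{\boxt n}$ in the product bases $\mathcal B_i^{\otimes n}$ gives
\[
 T^{\boxt n}(x^{(1)},\ldots,x^{(r)})=\sum_{\alpha^{(1)},\ldots,\alpha^{(n)}\in\Gamma}\prod_{j=1}^n c_{\alpha^{(j)}}\prod_{i=1}^r\prod_{j=1}^n b_{i,\alpha^{(j)}_i}(x^{(i)}_j).
\]
Each term is indexed by a word $w=(\alpha^{(1)},\ldots,\alpha^{(n)})\in\Gamma^n$. We group the words by their \emph{type}, the empirical distribution $\mu_w\in\mathcal P(\Gamma)$ with $\mu_w(\alpha)=|\{j:\alpha^{(j)}=\alpha\}|/n$. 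There are at most $(n+1)^{|\Gamma|}$ types, and slice rank is subadditive. It therefore suffices to show that, for each fixed type $\mu$, the partial sum $T_\mu$ over words of type $\mu$ satisfies $\srk(T_\mu)\le\exp(nH(\Gamma))$.

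Fix a type $\mu$. Since $H(\Gamma)$ is a max-min, there is an index $i$ with $H(\mu_i)\le H(\Gamma)$; choose such an $i$. In each word $w$ of type $\mu$, the $i$-th coordinate sequence $(\alpha^{(1)}_i,\ldots,\alpha^{(n)}_i)\in\{0,\ldots,q-1\}^n$ has empirical distribution exactly $\mu_i$. We then write
\[
 T_\mu=\sum_{\beta}\Bigl(\prod_{j}b_{i,\beta_j}(x^{(i)}_j)\Bigr)\cdot G_\beta(\text{other variables}),
\]
where $\beta$ ranges over the sequences in $\{0,\ldots,q-1\}^n$ of type $\mu_i$. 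Each summand is a slice in the $i$-th variable. Hence $\srk(T_\mu)$ is at most the number of sequences of type $\mu_i$, i.e.\ the multinomial coefficient $\binom{n}{n\mu_i(0),\ldots,n\mu_i(q-1)}$. The standard bound gives that this is at most $\exp(nH(\mu_i))\le\exp(nH(\Gamma))$. That standard bound follows from $1\ge\binom{n}{k}\prod_s\mu_i(s)^{k_s}$, where the product is the probability of one type class under the i.i.d.\ measure $\mu_i^{\otimes n}$. Summing over types yields the first inequality.

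For the consequence, restriction to $A^r$ does not increase slice rank. If the restriction of $T^{\boxt n}$ is diagonal with nonzero diagonal, Lemma~\ref{lem:diagonal} gives $|A|=\srk(T^{\boxt n}|_{A^r})\le\srk(T^{\boxt n})$.

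The only delicate point is the bookkeeping in the middle step. We must ensure that the chosen index $i$ may depend on the type $\mu$. This is legitimate because we decompose type by type before choosing slices. We must also check that the entropy bound for the type class uses exactly the normalization $\exp(nH)$, with no extra factor. The method of types handles this cleanly, so I expect no real obstacle.
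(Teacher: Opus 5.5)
Your proof is correct and follows the paper's argument essentially verbatim. Like the paper, you decompose $T^{\boxt n}$ by the empirical type of the index word in $\Gamma^n$, slice each type along a coordinate $i$ whose marginal entropy is at most $H(\Gamma)$ (the paper takes the minimizing one), bound the number of $i$-th coordinate words by the multinomial coefficient $\le\exp(nH(\mu_i))$, and finish with restriction and Lemma~\ref{lem:diagonal}. Your one-line justification of the type-class bound via $\mu_i^{\otimes n}$ is a welcome detail that the paper leaves implicit.
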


\begin{proof}
Expand $T^{\boxt n}$ using the chosen bases.  Its summands are indexed by words
\[
 \boldsymbol\alpha=(\alpha^{(1)},\ldots,\alpha^{(n)})\in\Gamma^n.
\]
For such a word, let $\nu_{\boldsymbol\alpha}$ be its empirical distribution on $\Gamma$.  There are at most $(n+1)^{|\Gamma|}$ possible empirical distributions.

Fix one such distribution $\nu$, and let $T_\nu$ be the sum of the terms having empirical distribution $\nu$.  Choose an index $i\in[r]$ satisfying
\[
 H(\nu_i)=\min_{1\le j\le r}H(\nu_j),
\]
where $\nu_i$ is the $i$-th marginal of $\nu$.  For each word
$a=(a_1,\ldots,a_n)\in\{0,\ldots,q-1\}^n$ of type $\nu_i$, group together all terms in $T_\nu$ whose $i$-th coordinate word is $a$.  This writes $T_\nu$ as a sum of slices, one for each such word $a$.

The number of words of type $\nu_i$ is the corresponding multinomial coefficient and is at most
\[
 \exp\bigl(nH(\nu_i)\bigr)
 \le \exp\bigl(nH(\Gamma)\bigr).
\]
Thus
\[
 \srk(T_\nu)\le\exp\bigl(nH(\Gamma)\bigr).
\]
Summing over the at most $(n+1)^{|\Gamma|}$ types proves the first assertion.  The second follows by restriction and Lemma~\ref{lem:diagonal}.
\end{proof}
The following extraction statement is an immediate consequence of the Tao--Sawin bound.
\begin{corollary}[Subexponential diagonal extraction]\label{cor:subexp-extraction}
Let $T:X^r\to\F$ be a fixed nonzero tensor with support $\Gamma$ in fixed product bases.  Let $\mathcal A_n$ be a finite combinatorial object endowed with a size $|\mathcal A_n|$.  Suppose that one can associate with it a set $A_n\subseteq X^n$ such that
\[
 |A_n|\ge \frac{|\mathcal A_n|}{M_n},
 \qquad \log M_n=o(n),
\]
and such that $T^{\boxt n}|_{A_n^r}$ is diagonal with nonzero diagonal.  Then
\[
 |\mathcal A_n|
 \le M_n(n+1)^{|\Gamma|}\exp\bigl(nH(\Gamma)\bigr)
 =\exp\bigl((H(\Gamma)+o(1))n\bigr).
\]
In particular, if $H(\Gamma)<\log q$, then $|\mathcal A_n|\le c^{n+o(n)}$ for some $c<q$.
\end{corollary}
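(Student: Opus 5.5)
The plan is to chain three inequalities: the hypothesis $|\mathcal A_n|\le M_n|A_n|$, the slice-rank bound of Theorem~\ref{thm:support-entropy}, and the diagonal tensor lemma. First, the restriction $T^{\boxt n}|_{A_n^r}$ is diagonal with nonzero diagonal. Lemma~\ref{lem:diagonal} therefore gives $\srk(T^{\boxt n}|_{A_n^r})=|A_n|$. Restriction cannot increase slice rank, so Theorem~\ref{thm:support-entropy} yields
\[
 |A_n|\le (n+1)^{|\Gamma|}\exp\bigl(nH(\Gamma)\bigr).
\]
This is precisely the second assertion of that theorem, applied with $A=A_n$.

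Next I multiply this bound by $M_n$, using $|\mathcal A_n|\le M_n|A_n|$, which gives the displayed inequality. For the asymptotic form, $\Gamma$ is a fixed finite set because $T$ is fixed. Hence $|\Gamma|\log(n+1)=O(\log n)=o(n)$, and by hypothesis $\log M_n=o(n)$. Taking logarithms gives $\log|\mathcal A_n|\le nH(\Gamma)+o(n)$, which is the stated form $\exp\bigl((H(\Gamma)+o(1))n\bigr)$.

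For the final claim, I set $c=\exp(H(\Gamma))$. Then $|\mathcal A_n|\le c^{n+o(n)}$, where the $o(n)$ term absorbs the $o(1)n$ error. The assumption $H(\Gamma)<\log q$ gives $c<q$.

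There is no real obstacle. The only point needing care is that the polynomial prefactor is subexponential, and this holds because $|\Gamma|\le q^r$ does not depend on $n$.
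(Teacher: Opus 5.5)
Your argument is correct and is the same as the paper's: apply the second assertion of Theorem~\ref{thm:support-entropy} to $A_n$, multiply by $M_n$, and absorb both $M_n$ and $(n+1)^{|\Gamma|}$ into the $o(n)$ term because $\Gamma$ is fixed. One small caveat concerns the last step: the choice $c=\exp(H(\Gamma))$ turns $\exp(o(n))$ into $c^{o(n)}$ only when $H(\Gamma)>0$, so in the degenerate case $H(\Gamma)=0$ (for instance $|\Gamma|=1$) you should take any $c\in(1,q)$ instead.
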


\begin{proof}
This is Theorem~\ref{thm:support-entropy} multiplied by the loss $M_n$.  Since $\Gamma$ is fixed and $\log M_n=o(n)$, both $M_n$ and $(n+1)^{|\Gamma|}$ are absorbed into the $o(n)$ term.
\end{proof}

The criterion for strict improvement over the trivial value $q^n$ is the following elementary consequence of the Tao--Sawin entropy functional.  In the symmetric three-variable setting, this uniform-marginal obstruction was already used in the arXiv proof of~\cite[Theorem~4.10]{CostaDellaFiore}.

\begin{proposition}[Uniform-marginal criterion]\label{prop:uniform-marginals}
For every nonempty $\Gamma\subseteq\{0,\ldots,q-1\}^r$, one has
\[
 H(\Gamma)=\log q
\]
if and only if there exists a probability distribution on $\Gamma$ all of whose marginals are uniform on $\{0,\ldots,q-1\}$.
\end{proposition}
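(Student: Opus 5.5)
The plan is to prove both implications directly from the definition of $H(\Gamma)$ as a max--min of marginal entropies, using only the elementary fact that a distribution on $\{0,\ldots,q-1\}$ has entropy at most $\log q$, with equality exactly for the uniform distribution. Two preliminary remarks are needed. First, $H(\Gamma)$ is a genuine maximum. The set $\mathcal P(\Gamma)$ is a compact simplex, each map $\mu\mapsto\mu_i$ is linear, and entropy is continuous on the simplex (with the convention $0\log 0=0$). So $\mu\mapsto\min_i H(\mu_i)$ is continuous and attains its supremum at some $\mu^\ast\in\mathcal P(\Gamma)$. Second, $H(\mu_i)\le\log q$ for every $i$ and every $\mu$. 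This follows from Jensen's inequality for the concave logarithm (or Gibbs' inequality against the uniform law), and equality holds if and only if $\mu_i$ is uniform, by strict concavity.

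For the implication ``$\Leftarrow$'', suppose $\mu\in\mathcal P(\Gamma)$ has all marginals uniform. Then $H(\mu_i)=\log q$ for every $i$, so $\min_i H(\mu_i)=\log q$, and hence $H(\Gamma)\ge\log q$. The upper bound $H(\Gamma)\le\log q$ holds always by the second remark, so $H(\Gamma)=\log q$.

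For the implication ``$\Rightarrow$'', suppose $H(\Gamma)=\log q$ and take the maximizer $\mu^\ast$. Then $\min_i H(\mu^\ast_i)=\log q$. Since each $H(\mu^\ast_i)\le\log q$, this forces $H(\mu^\ast_i)=\log q$ for every $i$. The equality case of the entropy bound then shows that every marginal $\mu^\ast_i$ is uniform on $\{0,\ldots,q-1\}$, and $\mu^\ast$ is supported on $\Gamma$ by construction.

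The only step that needs care is attainment of the maximum. Without it, a sequence $\mu^{(k)}$ with $\min_i H(\mu^{(k)}_i)\to\log q$ still works: by compactness it has a subsequence converging to some $\mu\in\mathcal P(\Gamma)$, and continuity of entropy gives $H(\mu_i)=\log q$ for all $i$. Apart from this point, and the strictness of the entropy bound, the argument is routine.
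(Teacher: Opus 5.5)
Your proposal is correct and follows essentially the same route as the paper: the bound $H(\mu_i)\le\log q$ with equality exactly for the uniform distribution, combined with compactness of $\mathcal P(\Gamma)$ and continuity of entropy to ensure the maximum defining $H(\Gamma)$ is attained. Your separate treatment of the two implications and the fallback sequential-compactness argument simply spell out what the paper states in one line.
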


\begin{proof}
Every probability distribution on a set of size $q$ has entropy at most $\log q$, with equality exactly for the uniform distribution.  Since $\mathcal P(\Gamma)$ is compact and the entropy is continuous, the maximum defining $H(\Gamma)$ is attained.  Therefore $H(\Gamma)=\log q$ exactly when some maximizing distribution has every marginal entropy equal to $\log q$, which is equivalent to all marginals being uniform.
\end{proof}

\subsection{Quantitative dual certificates}

The uniform-marginal condition is a finite linear-feasibility problem.  The next statement packages two standard facts, strict hyperplane separation and Pinsker's inequality, into a quantitative certificate suited to tensor supports; see, for example,~\cite{CoverThomas} for the information-theoretic inequality.  This finite certificate formulation is included because it is optimized explicitly in Section~\ref{sec:q3-gap}.

\begin{proposition}[Dual certificate]\label{prop:dual}
Let $\Gamma\subseteq\{0,\ldots,q-1\}^r$ be nonempty, and assume that it carries no probability distribution with all marginals uniform.  Then there exist functions
\[
 w_i:\{0,\ldots,q-1\}\longrightarrow\mathbb R\qquad(i\in[r])
\]
and a number $\eta>0$ such that
\[
 \sum_{a=0}^{q-1}w_i(a)=0
\]
for every $i$, and
\[
 \sum_{i=1}^r w_i(\alpha_i)\ge\eta
 \qquad\text{for every }\alpha\in\Gamma.
\]
If $W=\sum_i\lVert w_i\rVert_\infty$, then
\[
 H(\Gamma)\le\log q-\frac{\eta^2}{2W^2}.
\]
Consequently, if $T$ is a tensor whose support in fixed product bases is $\Gamma$, then
\[
 \srk(T^{\boxt n})
 \le (n+1)^{|\Gamma|}
 \left(q\exp\left(-\frac{\eta^2}{2W^2}\right)\right)^n.
\]
\end{proposition}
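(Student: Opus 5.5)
The proof has three parts: the existence of the certificate by linear duality, the entropy bound via Pinsker's inequality, and the slice-rank bound via Theorem~\ref{thm:support-entropy}.

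\emph{Step 1: existence of the certificate.} Consider the linear map $L:\mathbb R^\Gamma\to(\mathbb R^q)^r$ sending a measure $\mu$ to its tuple of marginals $(\mu_1,\ldots,\mu_r)$. Let $K=L(\mathcal P(\Gamma))$. It is a compact convex polytope, being the image of the simplex. The hypothesis says exactly that the point $u=(\tfrac1q\1,\ldots,\tfrac1q\1)$ does not lie in $K$. Strict separation of a point from a compact convex set gives $v=(v_1,\ldots,v_r)$ and $\eta'>0$ such that
\[
 \langle v,L\mu\rangle\ge\langle v,u\rangle+\eta'
 \qquad\text{for all }\mu\in K .
\]
Now set $w_i=v_i-\bar v_i$, where $\bar v_i$ is the mean of $v_i$. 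Then each $w_i$ sums to zero. For any $\mu$, whose marginals are probability vectors, we have
\[
 \langle w,L\mu\rangle=\langle v,L\mu\rangle-\langle v,u\rangle .
\]
Applying this to the point mass $\delta_\alpha$ gives $\sum_i w_i(\alpha_i)\ge\eta'=:\eta$ for every $\alpha\in\Gamma$.

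\emph{Step 2: the entropy bound.} Fix $\mu\in\mathcal P(\Gamma)$. Averaging the certificate inequality over $\mu$ gives
\[
 \sum_i\sum_a w_i(a)\,\mu_i(a)\ge\eta .
\]
Since $\sum_a w_i(a)\tfrac1q=0$, this can be rewritten as
\[
 \sum_i\sum_a w_i(a)\bigl(\mu_i(a)-\tfrac1q\bigr)\ge\eta .
\]
Each summand is at most $\lVert w_i\rVert_\infty\,\lVert\mu_i-U\rVert_1$, where $U$ is the uniform distribution. Hence there is some $i$ with $\lVert\mu_i-U\rVert_1\ge\eta/W$. Otherwise the left side would be at most $\sum_i\lVert w_i\rVert_\infty\max_j\lVert\mu_j-U\rVert_1<\eta$.

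Next, $\log q-H(\mu_i)=D(\mu_i\Vert U)$. By Pinsker's inequality (in nats), $D(\mu_i\Vert U)\ge\tfrac12\lVert\mu_i-U\rVert_1^2$. Therefore
\[
 \min_j H(\mu_j)\le H(\mu_i)\le\log q-\frac{\eta^2}{2W^2}.
\]
Taking the maximum over $\mu$ gives the bound on $H(\Gamma)$.

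We need $W>0$, and this holds because $\eta>0$ forces some $w_i\neq0$. The pigeonhole step must be done with the maximal $\lVert\mu_j-U\rVert_1$, which is exactly how it is phrased above.

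\emph{Step 3: the slice-rank bound.} Insert the bound on $H(\Gamma)$ into Theorem~\ref{thm:support-entropy}:
\[
 \exp\bigl(nH(\Gamma)\bigr)\le\Bigl(q\,e^{-\eta^2/(2W^2)}\Bigr)^n .
\]

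The only delicate points are two pieces of bookkeeping. First, the mean-centering in Step~1 changes nothing, because marginals of probability measures have total mass one. Second, Pinsker's inequality must use the natural logarithm, consistent with the paper's $H$. Neither step is a genuine obstacle.
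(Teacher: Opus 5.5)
Your proof is correct and follows the paper's argument essentially step for step: strict separation of the uniform point from the convex hull of the incidence vectors (your image of the simplex under the marginal map), mean-centering of the $w_i$, averaging the certificate against $\mu$ to force $\lVert\mu_i-u_q\rVert_1\ge\eta/W$ for some $i$, Pinsker's inequality in nats, and then Theorem~\ref{thm:support-entropy}. Your explicit check that $W>0$ is a small detail that the paper leaves implicit.
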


\begin{proof}
Associate to $\alpha\in\Gamma$ its incidence vector
\[
 v(\alpha)=(e_{\alpha_1},\ldots,e_{\alpha_r})\in\mathbb R^{rq},
\]
where $e_a$ denotes the $a$-th standard basis vector of $\mathbb R^q$, and let $u$ be the vector whose entries in each of the $r$ blocks are all $1/q$.  A distribution on $\Gamma$ has uniform marginals exactly when $u$ belongs to the convex hull of the vectors $v(\alpha)$.  Since this does not happen, strict hyperplane separation gives functions $w_i$ and $\eta>0$ such that, after subtracting the average of each $w_i$,
\[
 \sum_iw_i(\alpha_i)\ge\eta
\]
for all $\alpha\in\Gamma$, while each $w_i$ has zero average under the uniform distribution.

Let $\mu$ be any probability distribution on $\Gamma$, with marginals $\mu_i$, and let $u_q$ be uniform on $\{0,\ldots,q-1\}$.  Taking expectations gives
\[
 \eta\le\sum_i\left|\mathbb E_{\mu_i}w_i-\mathbb E_{u_q}w_i\right|
 \le W\max_i\lVert\mu_i-u_q\rVert_1.
\]
Hence some $i$ satisfies $\lVert\mu_i-u_q\rVert_1\ge\eta/W$.  Write
\[
 D(\nu\Vert u_q):=\sum_{a=0}^{q-1}\nu(a)\log\frac{\nu(a)}{u_q(a)}
\]
for the relative entropy.  By Pinsker's inequality~\cite{CoverThomas}, with natural logarithms,
\[
 \log q-H(\mu_i)=D(\mu_i\Vert u_q)
 \ge\frac12\lVert\mu_i-u_q\rVert_1^2
 \ge\frac{\eta^2}{2W^2}.
\]
Maximizing $\min_iH(\mu_i)$ over $\mu$ proves the entropy estimate, and Theorem~\ref{thm:support-entropy} gives the final bound.
\end{proof}

\subsection{Low-degree bounds and direct strong-avoidance consequences}
We now recall the usual low-degree slice-rank bound and apply it to elementary symmetric polynomials. The degree-splitting lemma is the usual slice-rank formulation of the Croot--Lev--Pach--Ellenberg--Gijswijt argument~\cite{CrootLevPach,EllenbergGijswijt,TaoBlog}.  The strong-avoidance statements below are immediate applications, included as comparison results rather than as new slice-rank theorems.

For an integer $Q\ge2$ and $D\ge0$, define
\[
 M_Q(n,D)=
 \left|\left\{(a_1,\ldots,a_n)\in\{0,\ldots,Q-1\}^n:
 a_1+\cdots+a_n\le D\right\}\right|.
\]
We use this notation with $Q=q$ on the full field alphabet and with $Q=q-1$ on the multiplicative torus.

\begin{lemma}[Low-degree slice bound]\label{lem:degree-slice}
Let $P$ be a polynomial function on $(\F_q^n)^r$ represented by a reduced polynomial of total degree at most $D$.  Then
\[
 \srk(P)\le rM_q\left(n,\left\lfloor\frac Dr\right\rfloor\right).
\]
\end{lemma}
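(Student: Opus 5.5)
We prove Lemma~\ref{lem:degree-slice} with the standard Croot--Lev--Pach--Ellenberg--Gijswijt pigeonhole on monomial degrees, in the slice-rank formulation of Tao~\cite{TaoBlog}. Write the variables as $x^{(i)}=(x^{(i)}_1,\ldots,x^{(i)}_n)\in\F_q^n$ for $i\in[r]$. Since $P$ is a reduced polynomial, every exponent of every variable lies in $\{0,\ldots,q-1\}$. We expand
\[
 P=\sum c_{a^{(1)},\ldots,a^{(r)}}\prod_{i=1}^r (x^{(i)})^{a^{(i)}},
\]
where each $a^{(i)}\in\{0,\ldots,q-1\}^n$ and $\sum_i|a^{(i)}|\le D$, with $|a|=a_1+\cdots+a_n$. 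The first observation is that each monomial in this expansion has some block $i$ with $|a^{(i)}|\le\lfloor D/r\rfloor$. Indeed, if every block had $|a^{(i)}|>D/r$, the total degree would exceed $D$. Because the block degrees are integers, the condition $|a^{(i)}|\le D/r$ is the same as $|a^{(i)}|\le\lfloor D/r\rfloor$.

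For each monomial we choose the least such index $i$. This partitions the monomials into $r$ classes $S_1,\ldots,S_r$. We handle each class $S_i$ by grouping its monomials according to the exponent $a^{(i)}$ of the distinguished block. For a fixed $a$ with $|a|\le\lfloor D/r\rfloor$, the terms in the group have the form
\[
 (x^{(i)})^{a}\cdot g_{a}\bigl(x^{(1)},\ldots,\widehat{x^{(i)}},\ldots,x^{(r)}\bigr).
\]
This is a single slice in the $i$-th variable, where the variable $x_i$ of the definition is now the vector $x^{(i)}\in\F_q^n$.

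The number of admissible exponents $a$ is exactly $M_q(n,\lfloor D/r\rfloor)$. So the part of $P$ coming from $S_i$ is a sum of at most $M_q(n,\lfloor D/r\rfloor)$ slices. Summing over the $r$ classes gives $\srk(P)\le rM_q(n,\lfloor D/r\rfloor)$.

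No step here is a genuine obstacle. The only point needing care is that a function on $(\F_q^n)^r$ is indeed represented by reduced monomials. This holds because every function $\F_q\to\F_q$ is a polynomial with exponents at most $q-1$. It is what makes the count of exponent vectors $M_q$ (with $Q=q$) the right quantity.
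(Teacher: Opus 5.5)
Your proposal is correct and follows essentially the same argument as the paper: expand in reduced monomials, assign each monomial to the least block of degree at most $\lfloor D/r\rfloor$, group by that block's exponent to get one slice per exponent vector, and count. Your remark that integrality of block degrees lets you replace $D/r$ by $\lfloor D/r\rfloor$ makes explicit a point the paper leaves implicit.
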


\begin{proof}
Expand $P$ in reduced monomials, so every scalar exponent lies in $\{0,\ldots,q-1\}$.  For each monomial, split its total degree among the $r$ vector-variable groups.  At least one group has degree at most $D/r$; assign the monomial to the least such group.  For a fixed group $i$, collect all monomials having the same monomial factor in the variables of that group.  Each collection is a slice with distinguished variable group $i$.  The number of possible distinguished monomials is at most $M_q(n,\lfloor D/r\rfloor)$.  Summing over $i$ proves the claim.
\end{proof}

For $0\le\alpha\le q-1$, set
\[
 \kappa_q(\alpha)
 :=\inf_{0<\rho<1}
 (1+\rho+\cdots+\rho^{q-1})\rho^{-\alpha}.
\]
The generating-function estimate
\[
 M_q(n,\alpha n)\le\kappa_q(\alpha)^n
\]
is immediate: for $0<\rho<1$,
\[
 M_q(n,\alpha n)\rho^{\alpha n}
 \le(1+\rho+\cdots+\rho^{q-1})^n.
\]
Moreover,
\[
 \kappa_q(\alpha)<q\qquad\text{whenever }\alpha<(q-1)/2.
\]
Indeed, with $\rho=e^s$, the derivative at $s=0$ of
\[
 \log(1+e^s+\cdots+e^{(q-1)s})-\alpha s
\]
is $(q-1)/2-\alpha>0$, so the expression is smaller than $\log q$ for negative $s$ sufficiently close to zero.

Let $F_1,\ldots,F_s\in\F_q[x_1,\ldots,x_t]$ have total degrees $d_1,\ldots,d_s$.  Write $d=d_1+\cdots+d_s$.

\begin{definition}
A set $A\subseteq\F_q^n$ is \emph{strongly $(F_1,\ldots,F_s)$-free} if the coordinatewise system
\[
 F_j(x^{(1)},\ldots,x^{(t)})=0\qquad(j\in[s])
\]
with $x^{(1)},\ldots,x^{(t)}\in A$ implies
\[
 x^{(1)}=\cdots=x^{(t)}.
\]
\end{definition}

The standard diagonal construction gives the following direct consequence.

\begin{proposition}[Strong polynomial-system consequence]\label{thm:strong-system}
Assume that
\[
 F_j(a,\ldots,a)=0
\qquad\text{for every }a\in\F_q\text{ and }j\in[s].
\]
If $A\subseteq\F_q^n$ is strongly $(F_1,\ldots,F_s)$-free, then
\[
 |A|\le
 tM_q\left(n,\left\lfloor\frac{d(q-1)n}{t}\right\rfloor\right).
\]
In particular, if $t>2d$, then
\[
 |A|\le t\,\kappa_q\left(\frac{d(q-1)}t\right)^n
\]
with
\[
 \kappa_q\left(\frac{d(q-1)}t\right)<q.
\]
\end{proposition}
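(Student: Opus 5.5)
The plan is the standard diagonal construction for the product of the $F_j$. Define the tensor $P:(\F_q^n)^t\to\F_q$ by
\[
 P(x^{(1)},\ldots,x^{(t)})=\prod_{i=1}^n\prod_{j=1}^s\Bigl(1-F_j\bigl(x^{(1)}_i,\ldots,x^{(t)}_i\bigr)^{q-1}\Bigr).
\]
By Fermat, each factor $1-F_j(\cdot)^{q-1}$ equals $1$ if $F_j$ vanishes at the $i$-th coordinate tuple and $0$ otherwise. Hence $P(x^{(1)},\ldots,x^{(t)})=1$ exactly when the coordinatewise system holds, and $P=0$ otherwise. Restrict $P$ to $A^t$.

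\emph{Diagonality.} By strong freeness, a nonzero value on $A^t$ forces $x^{(1)}=\cdots=x^{(t)}$. Conversely, the hypothesis $F_j(a,\ldots,a)=0$ gives value $1$ on every diagonal point. So $P|_{A^t}$ is diagonal with nonzero diagonal, and Lemma~\ref{lem:diagonal} yields $\srk(P|_{A^t})=|A|$. Since restriction does not increase slice rank, $|A|\le\srk(P)$.

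\emph{Degree count.} Each factor has degree at most $(q-1)d_j$, so $P$ has total degree at most $\sum_i\sum_j(q-1)d_j=(q-1)dn$. Reducing modulo $x^q-x$ in each scalar variable does not increase degree. Lemma~\ref{lem:degree-slice} with $r=t$ and $D=(q-1)dn$ then gives
\[
 |A|\le tM_q\bigl(n,\lfloor d(q-1)n/t\rfloor\bigr).
\]

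\emph{Exponential form.} Set $\alpha=d(q-1)/t$. The estimate $M_q(n,\alpha n)\le\kappa_q(\alpha)^n$ proved above applies because $M_q$ is monotone in $D$ and $\lfloor\alpha n\rfloor\le\alpha n$. This gives $|A|\le t\,\kappa_q(\alpha)^n$. If $t>2d$ then $\alpha<(q-1)/2$, and the derivative argument above shows $\kappa_q(\alpha)<q$. One also needs $\alpha\le q-1$ for $\kappa_q$ to be in the defined range, and $t>2d$ ensures this.

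There is no real obstacle. The only point needing care is that the degree bound must survive the reduction to reduced monomials, which it does because reduction never raises degree.
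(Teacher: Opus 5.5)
Your proposal is correct and follows the paper's proof essentially line by line. Both arguments apply the diagonal tensor lemma to the coordinatewise power of $\prod_j(1-F_j^{q-1})$ restricted to $A^t$, bound its reduced degree by $d(q-1)n$, and invoke Lemma~\ref{lem:degree-slice} with $r=t$ together with the generating-function and derivative facts about $\kappa_q$.
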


\begin{proof}
For scalar variables define
\[
 \tau(x_1,\ldots,x_t)=
 \prod_{j=1}^s\bigl(1-F_j(x_1,\ldots,x_t)^{q-1}\bigr).
\]
This is the indicator of the simultaneous zero set of the $F_j$.  Therefore $\tau^{\boxt n}$ is diagonal with diagonal value $1$ on $A^t$.  By Lemma~\ref{lem:diagonal},
\[
 |A|\le\srk(\tau^{\boxt n}).
\]
The polynomial $\tau$ has total degree at most $d(q-1)$.  After reducing exponents modulo the relations $x^q=x$, its coordinatewise tensor power is represented by a reduced polynomial of total degree at most $d(q-1)n$.  Lemma~\ref{lem:degree-slice} gives the first bound.  The exponential estimate and its strictness under $t>2d$ follow from the preceding discussion.
\end{proof}

Specializing the preceding standard consequence to one elementary symmetric polynomial gives the following immediate corollary.

\begin{definition}
A set $A\subseteq\F_q^n$ is \emph{strongly $e_m$-free of length $t$} if
\[
 e_m(x^{(1)},\ldots,x^{(t)})=0
\]
coordinatewise for elements of $A$ implies that all $x^{(i)}$ are equal.
\end{definition}

\begin{corollary}[Strongly $e_m$-free consequence]\label{thm:strong-em}
Let $q$ have characteristic $p$.  Suppose
\[
 p\mid\binom tm.
\]
If $A\subseteq\F_q^n$ is strongly $e_m$-free of length $t$, then
\[
 |A|\le
 tM_q\left(n,\left\lfloor\frac{m(q-1)n}{t}\right\rfloor\right).
\]
If additionally $t>2m$, then
\[
 |A|\le t\,\kappa_{q,t,m}^{\,n},
\qquad
 \kappa_{q,t,m}:=
 \inf_{0<\rho<1}
 (1+\rho+\cdots+\rho^{q-1})
 \rho^{-m(q-1)/t}<q.
\]
\end{corollary}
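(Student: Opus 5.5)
The plan is to specialize Proposition~\ref{thm:strong-system} to the single polynomial $F_1=e_m$ in $t$ variables, so that $s=1$ and $d=m$. That proposition has one hypothesis, namely that $F_1$ vanishes on the diagonal. For $a\in\F_q$ we have
\[
 e_m(a,\ldots,a)=\binom tm a^m ,
\]
since $e_m$ is a sum of $\binom tm$ monomials, each of which evaluates to $a^m$. The hypothesis $p\mid\binom tm$ makes this coefficient zero in $\F_q$, so $e_m(a,\ldots,a)=0$ for every $a$. This is the only point where the divisibility assumption enters.

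Next we match the definitions. A set that is strongly $e_m$-free of length $t$ is exactly a set that is strongly $(e_m)$-free in the sense of the earlier definition, with the single polynomial $e_m\in\F_q[x_1,\ldots,x_t]$ of total degree $m$. Proposition~\ref{thm:strong-system} then gives directly
\[
 |A|\le tM_q\left(n,\left\lfloor\frac{m(q-1)n}{t}\right\rfloor\right).
\]

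For the exponential form, assume $t>2m$, which is the condition $t>2d$ of Proposition~\ref{thm:strong-system}. That proposition yields
\[
 |A|\le t\,\kappa_q\bigl(m(q-1)/t\bigr)^n .
\]
By the definition of $\kappa_q$, the quantity $\kappa_q(m(q-1)/t)$ is exactly the infimum $\kappa_{q,t,m}$ in the statement. The strict inequality $\kappa_{q,t,m}<q$ follows from the earlier derivative observation, because $\alpha=m(q-1)/t<(q-1)/2$ holds precisely when $t>2m$.

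There is no real obstacle here. The only substantive check is the diagonal vanishing, and the binomial count above settles it.
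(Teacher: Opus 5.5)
Your proposal is correct and matches the paper's proof. You specialize Proposition~\ref{thm:strong-system} to the single polynomial $e_m$ with $d=m$, and you get diagonal vanishing from $e_m(a,\ldots,a)=\binom tm a^m=0$ in characteristic $p$. Your identification of $\kappa_q(m(q-1)/t)$ with $\kappa_{q,t,m}$, and the strictness via $m(q-1)/t<(q-1)/2\iff t>2m$, are also what the paper's argument relies on.
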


\begin{proof}
The polynomial $e_m$ has total degree $m$, and
\[
 e_m(a,\ldots,a)=\binom tm a^m=0
\]
in characteristic $p$.  Apply Proposition~\ref{thm:strong-system} with one polynomial.
\end{proof}

\begin{example}
For $q=5^k$, $t=5$, and $m=2$, every strongly $e_2$-free set satisfies
\[
 |A|\le5\left[
 \inf_{0<\rho<1}(1+\rho+\cdots+\rho^{q-1})
 \rho^{-2(q-1)/5}
 \right]^n,
\]
and the base of the exponential is strictly smaller than $q$.
\end{example}

\begin{example}
For $q=7^k$ and $t=7$, the same conclusion holds for $m=2$ and for $m=3$, since
\[
 7\mid\binom72,\qquad7\mid\binom73,
\]
and $7>2m$ in both cases.
\end{example}

\begin{remark}
Strong freeness is substantially stronger than the condition used in the definition of the higher-degree EGZ constant.  That definition excludes only solutions obtained from distinct indices of a sequence; it does not exclude solutions with repeated values.  The partition-rank criteria of Section~\ref{sec:partition-criteria} and the applications of Section~\ref{sec:higher-egz} address this distinction and culminate in a new full-space higher-degree EGZ application.
\end{remark}

\section{The characteristic-three quadratic problem}\label{sec:q3-gap}

We now consider $e_2(x,y,z)=xy+yz+zx$ over fields of characteristic three.  The elementary degree condition of Corollary~\ref{thm:strong-em} does not apply, but the coefficient support still has entropy below $\log q$.  We first recall the qualitative argument from the earlier arXiv version of a paper by the first two authors~\cite[Theorem~4.10 and Remark~4.11]{CostaDellaFiore}, which was omitted from the published article.  We then describe the support in ternary digits and obtain the new certificate with optimal normalized margin and a uniform analytic bound.

\subsection{Qualitative support gap}

Let $q=3^k$ and define
\[
 \tau_q(x,y,z)=1-(xy+yz+zx)^{q-1}.
\]
This is the indicator of the equation $xy+yz+zx=0$.  In the monomial basis,
\[
 \tau_q(x,y,z)
 =1-\sum_{a+b+c=q-1}
 \binom{q-1}{a,b,c}
 x^{a+c}y^{a+b}z^{b+c}.
\]
Let $\Gamma_q$ be its support.

We shall use the following multinomial form of Lucas' theorem~\cite{Lucas1878}.  If
\[
 N=\sum_{j\ge0}N_j3^j,\qquad
 a=\sum_{j\ge0}a_j3^j,\qquad
 b=\sum_{j\ge0}b_j3^j,\qquad
 c=\sum_{j\ge0}c_j3^j
\]
are base-three expansions of nonnegative integers with $a+b+c=N$, then
\[
 \binom{N}{a,b,c}
 \equiv
 \prod_{j\ge0}\binom{N_j}{a_j,b_j,c_j}
 \pmod 3.
\]
Here a local multinomial coefficient is understood to be zero unless
$a_j+b_j+c_j=N_j$.  In particular, when $N=q-1=3^k-1$, whose $k$ ternary digits are all equal to $2$, the coefficient is nonzero in characteristic three exactly when
\[
 a_j+b_j+c_j=2\qquad(0\le j<k).
\]

\begin{proposition}[Qualitative support gap; Costa--Della Fiore]\label{prop:q3-support-gap}
If $q=3^k$ with $k>1$, then
\[
 H(\Gamma_q)<\log q.
\]
Consequently, with $\rho_q:=\exp(H(\Gamma_q))<q$, Theorem~\ref{thm:support-entropy} gives
\[
 \srk(\tau_q^{\boxt n})
 \le \exp\!\bigl((H(\Gamma_q)+o(1))n\bigr)
 =\rho_q^{\,n+o(n)}.
\]
For $q=3$, one has $H(\Gamma_3)=\log3$.
\end{proposition}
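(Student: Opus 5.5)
The plan is to describe $\Gamma_q$ explicitly using the multinomial Lucas theorem, and then apply Proposition~\ref{prop:uniform-marginals}. The obstruction to uniform marginals should come from a single linear functional, namely the sum of the three exponents. Once $H(\Gamma_q)<\log q$ is established, the bound on $\srk(\tau_q^{\boxt n})$ is immediate from Theorem~\ref{thm:support-entropy}, because $|\Gamma_q|$ is fixed and so $(n+1)^{|\Gamma_q|}=e^{o(n)}$.

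\emph{Step 1: describe the support.} By the Lucas criterion recalled above, the monomial $x^{a+c}y^{a+b}z^{b+c}$ with $a+b+c=q-1$ appears with nonzero coefficient exactly when $a_j+b_j+c_j=2$ for every $j<k$. In that case each digit sum such as $a_j+c_j$ is at most $2$, so no carries occur. The ternary digits of the three exponents are then $(a_j+c_j,\,a_j+b_j,\,b_j+c_j)$, where $(a_j,b_j,c_j)$ ranges over the compositions of $2$ into three parts. This produces the local set
\[
 G=\{(2,2,0),(2,0,2),(0,2,2),(2,1,1),(1,2,1),(1,1,2)\}.
\]
Distinct choices of digit triples give distinct exponent triples, so there is no cancellation between different $(a,b,c)$ contributing to the same monomial. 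Including the constant term $1$ (the monomial $1$ itself does not appear in the sum, since $a+b+c=q-1>0$), I expect
\[
 \Gamma_q=\{(0,0,0)\}\cup\Bigl\{\bigl(\textstyle\sum_j g^{(j)}_13^j,\sum_j g^{(j)}_23^j,\sum_j g^{(j)}_33^j\bigr): g^{(j)}\in G\Bigr\}.
\]
Verifying this description carefully, including the absence of cancellation, is the only real bookkeeping in the proof.

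\emph{Step 2: the obstruction for $k>1$.} Every element of $G$ has coordinate sum $4$. Hence every nonzero $\alpha\in\Gamma_q$ satisfies $\alpha_1+\alpha_2+\alpha_3=4(3^k-1)/2=2(q-1)$.

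Suppose $\mu$ is a distribution on $\Gamma_q$ with all marginals uniform. Each coordinate then has mean $(q-1)/2$, so the expected coordinate sum is $3(q-1)/2$. If $\varepsilon$ denotes the mass of $\mu$ at the origin, this forces $(1-\varepsilon)\cdot 2(q-1)=3(q-1)/2$, so $\varepsilon=1/4$.

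On the other hand, the first marginal must give value $0$ probability exactly $1/q$, and this probability is at least $\varepsilon=1/4$. For $q=3^k\ge9$ this contradicts $1/q<1/4$. Proposition~\ref{prop:uniform-marginals} therefore gives $H(\Gamma_q)<\log q$.

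\emph{Step 3: the case $q=3$.} Here $\Gamma_3=\{(0,0,0)\}\cup G$, and I would exhibit an explicit uniform-marginal distribution. Place mass $1/4$ at the origin, mass $1/12$ on each permutation of $(2,2,0)$, and mass $1/6$ on each permutation of $(2,1,1)$. The total mass is $1/4+3/12+3/6=1$. In each coordinate:
\begin{itemize}
 \item the value $0$ has probability $1/4+1/12=1/3$;
 \item the value $1$ has probability $2\cdot\tfrac16=1/3$;
 \item the value $2$ has probability $2\cdot\tfrac1{12}+\tfrac16=1/3$.
\end{itemize}
By Proposition~\ref{prop:uniform-marginals}, $H(\Gamma_3)=\log3$.

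The main obstacle is Step~1, namely justifying the digitwise description of the support. After that, the argument reduces to the one-line averaging contradiction in Step~2.
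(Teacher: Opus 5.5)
Your proof is correct, but for $k>1$ it reaches the contradiction by a different mechanism than the paper.

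\emph{The paper's route.} It averages a hypothetical uniform-marginal distribution over $S_3$. It then examines the support points with first coordinate $1$ and $3$. The Lucas congruence kills the two middle candidates $(3,q-2,q-3)$ and $(3,q-3,q-2)$. The two surviving orbits each force mass $1/q$ onto the value $q-1$ of the first marginal, so that value receives $2/q>1/q$.

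\emph{Your route.} You use a single linear functional, the coordinate sum, which takes only the values $0$ and $2(q-1)$ on $\Gamma_q$. Matching its mean $3(q-1)/2$ pins the mass of the origin at exactly $1/4$. That mass then overloads the value $0$ of the first marginal as soon as $q>4$. This is shorter, needs no symmetrization, and explains the threshold: your $q=3$ distribution indeed puts mass $1/4$ on the origin, and $1/4\le 1/q$ only for $q\le4$.

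\emph{Step 1 is more than Step 2 needs.} Every monomial $x^{a+c}y^{a+b}z^{b+c}$ of $(xy+yz+zx)^{q-1}$ has all exponents at most $a+b+c=q-1$. So no reduction by $x^q=x$ occurs, and the coordinate sum equals $2(q-1)$ by homogeneity alone, whatever the coefficients are modulo $3$. Hence the same argument shows that the support of $1-(xy+yz+zx)^{q-1}$ has entropy below $\log q$ for every prime power $q\ge5$. The Lucas description is needed only to identify $\Gamma_3$, and there it is trivial since $\binom{2}{1,1,0}=2\ne0$.

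\emph{What the paper's argument buys.} Its local argument works directly with the ternary structure of the support. That is the same structure later exploited by the digit-sum certificate of Proposition~\ref{prop:q3-explicit-dual}.
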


\begin{proof}
The support is invariant under permutations of the three coordinates.  Hence, if a distribution with uniform marginals existed, averaging it over $S_3$ would give an $S_3$-invariant one.  Let $Y=(Y_1,Y_2,Y_3)$ have such a distribution.

There are exactly two support points with first coordinate $1$,
\[
 (1,q-1,q-2),\qquad(1,q-2,q-1).
\]
They lie in the same orbit, so uniformity of the first marginal forces each to have mass $1/(2q)$.

For first coordinate $3$, the possible triples are
\[
 (3,q-1,q-4),\ (3,q-2,q-3),\ (3,q-3,q-2),\ (3,q-4,q-1).
\]
The four corresponding multinomial coefficients are, in the same order,
\[
 \binom{q-1}{3,q-4,0},\quad
 \binom{q-1}{2,q-4,1},\quad
 \binom{q-1}{1,q-4,2},\quad
 \binom{q-1}{0,q-4,3}.
\]
By the multinomial Lucas congruence stated above, the two outer coefficients are nonzero: the ternary addition $3+(q-4)=q-1$ is digitwise and has no carries.  The two middle coefficients vanish, since their least significant ternary digits do not sum to the least significant digit $2$ of $q-1$.  The two surviving points again form one orbit, so each has mass $1/(2q)$.

The first orbit contributes mass $1/q$ to $Y_1=q-1$, and the second contributes another $1/q$, contradicting uniformity.  Proposition~\ref{prop:uniform-marginals} gives $H(\Gamma_q)<\log q$, and Theorem~\ref{thm:support-entropy} gives the slice-rank estimate.

For $q=3$, the support is
\[
 \{(0,0,0),(2,2,0),(0,2,2),(2,0,2),(2,1,1),(1,2,1),(1,1,2)\}.
\]
The probabilities
\[
 \frac14,\ \frac1{12},\ \frac1{12},\ \frac1{12},\ \frac16,\ \frac16,\ \frac16
\]
in this order have uniform marginals.  Hence $H(\Gamma_3)=\log3$.
\end{proof}

\subsection{Ternary support and an optimal dual margin}

\begin{lemma}[Digit description of the characteristic-three support]\label{lem:q3-digit-support}
Let $q=3^k$.  For every integer $a\in\{0,\ldots,q-1\}$, write its unique ternary expansion as
\[
 a=\sum_{j=0}^{k-1}a_j3^j,
 \qquad a_j\in\{0,1,2\},
\]
and define the ternary digit-sum map
\[
 s_3:\{0,\ldots,q-1\}\longrightarrow\{0,\ldots,2k\}\subseteq\mathbb N,
 \qquad
 s_3(a):=\sum_{j=0}^{k-1}a_j.
\]
Here $a$, and later $u,v,w$, are integer exponent indices, not elements of $\F_q$.  Then
\[
 \Gamma_q=\{(0,0,0)\}\cup\Gamma_q^\ast,
\]
where $(u,v,w)\in\Gamma_q^\ast$ if and only if, digit by digit,
\[
 u_j+v_j+w_j=4
 \qquad(0\le j<k).
\]
In particular, every point of $\Gamma_q^\ast$ satisfies
\[
 s_3(u)+s_3(v)+s_3(w)=4k,
\]
and at most one of $u,v,w$ is zero.
\end{lemma}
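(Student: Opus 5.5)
The plan is to compute the support directly from the expansion of $\tau_q$ recorded above and then translate the exponent conditions into ternary digits with the multinomial Lucas congruence.

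\textbf{Step 1: the exponents in $\tau_q$.} The constant $1$ contributes $(0,0,0)$. A term indexed by $(a,b,c)$ with $a+b+c=q-1$ contributes the exponent
\[
 (u,v,w)=(a+c,\;a+b,\;b+c).
\]
The map $(a,b,c)\mapsto(a+c,a+b,b+c)$ is injective, since $a=(u+v-w)/2$, $b=(v+w-u)/2$ and $c=(u+w-v)/2$. Hence distinct terms of the sum never cancel, and no term can cancel the constant $1$: the sum $u+v+w=2(q-1)$ is positive. Each exponent is at most $q-1$, so the monomials are already reduced. Therefore $\Gamma_q^\ast$ is exactly the set of triples $(a+c,a+b,b+c)$ for which the multinomial coefficient $\binom{q-1}{a,b,c}$ is nonzero mod $3$.

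\textbf{Step 2: Lucas.} By the congruence stated before Proposition~\ref{prop:q3-support-gap}, this coefficient is nonzero exactly when the digit condition $a_j+b_j+c_j=2$ holds for every $j$. In that case $a$, $b$ and $c$ add without carries. Consequently $a+c$, $a+b$ and $b+c$ also add without carries, because each digit sum $a_j+c_j$ is at most $2$. So
\[
 u_j=a_j+c_j,\qquad v_j=a_j+b_j,\qquad w_j=b_j+c_j,
\]
and therefore $u_j+v_j+w_j=2(a_j+b_j+c_j)=4$ for every $j$.

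\textbf{Step 3: the converse.} Suppose $u_j+v_j+w_j=4$ with all $u_j,v_j,w_j\in\{0,1,2\}$. Set
\[
 a_j=2-w_j,\qquad b_j=2-u_j,\qquad c_j=2-v_j.
\]
These lie in $\{0,1,2\}$, and $a_j+b_j+c_j=6-4=2$. Moreover $a_j+c_j=4-w_j-v_j=u_j$, and similarly for the other two coordinates. The resulting $a,b,c$ satisfy $a+b+c=q-1$, have nonzero Lucas coefficient, and map to $(u,v,w)$. This gives the characterization of $\Gamma_q^\ast$.

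\textbf{Step 4: consequences.} Summing the digit identity over $j$ gives $s_3(u)+s_3(v)+s_3(w)=4k$. If two of the coordinates were zero, the third would need every digit equal to $4$, which is impossible. Hence at most one of $u,v,w$ is zero.

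The only delicate point is the no-carry claim in Step 2. It is what lets us read off the digits of $u$, $v$ and $w$ from those of $a$, $b$ and $c$. The claim follows because every pairwise digit sum is bounded by $a_j+b_j+c_j=2$.
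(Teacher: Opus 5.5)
Your proof is correct and follows the paper's argument: expand $\tau_q$, apply the multinomial Lucas congruence to get $a_j+b_j+c_j=2$, and read off the digits of $(u,v,w)$. You do this via carry-free sums $u_j=a_j+c_j$, while the paper uses borrow-free differences $u_j=2-b_j$; your converse then inverts with the same map as the paper. Your injectivity check, which shows that no two terms merge and that the constant $1$ is not cancelled, makes explicit a point the paper leaves implicit.
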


\begin{proof}
A nonconstant monomial in $\tau_q$ is indexed by integers $a,b,c\in\{0,\ldots,q-1\}$ with
$a+b+c=q-1$ and coefficient
$\binom{q-1}{a,b,c}$.  Applying the multinomial Lucas congruence stated above, and using that every ternary digit of $q-1$ equals $2$, this coefficient is nonzero in characteristic three exactly when
\[
 a_j+b_j+c_j=2
 \qquad(0\le j<k).
\]
Equivalently, the addition $a+b+c=q-1$ is carry-free in base three.  The corresponding exponent triple is
\[
 (u,v,w)=(a+c,a+b,b+c)=(q-1-b,q-1-c,q-1-a).
\]
No borrowing occurs in these subtractions, and hence
\[
 (u_j,v_j,w_j)=(2-b_j,2-c_j,2-a_j),
\]
whose entries sum to $4$.  The converse follows by reversing this construction.  Summing over the digits gives the identity for the digit sums.  Moreover, if two of $u,v,w$ were zero, then the remaining digit would have to equal $4$ in every position, which is impossible; hence at most one coordinate is zero.
\end{proof}

\begin{proposition}[Explicit dual certificate with optimal normalized margin]\label{prop:q3-explicit-dual}
Let $q=3^k$ with $k\ge2$, and set
\[
 d_q:=\frac{q-4}{3q-4}.
\]
Define $w_q:\{0,\ldots,q-1\}\to\mathbb R$ by
\[
 w_q(0)=d_q,
\]
and, for $a\ne0$,
\[
 w_q(a)
 =\frac{3q}{k(3q-4)}s_3(a)
  -\frac{3q+4}{3q-4}.
\]
Then
\[
 \sum_{a=0}^{q-1}w_q(a)=0,
 \qquad
 \lVert w_q\rVert_\infty=1,
\]
and
\[
 w_q(u)+w_q(v)+w_q(w)\ge 3d_q
 \qquad((u,v,w)\in\Gamma_q).
\]
Moreover, the ratio $d_q$ is optimal among all certificates in Proposition~\ref{prop:dual}: if $w_1,w_2,w_3$ and $\eta$ form such a certificate and
$W=\sum_i\lVert w_i\rVert_\infty$, then
\[
 \frac{\eta}{W}\le d_q.
\]
Consequently,
\[
 H(\Gamma_q)
 \le \log q-\frac{d_q^2}{2},
\qquad
 \exp\bigl(H(\Gamma_q)\bigr)
 \le q\exp\!\left(-\frac{d_q^2}{2}\right)<q.
\]
\end{proposition}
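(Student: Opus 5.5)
The plan is to verify the four assertions in turn: the normalization, the sup norm, the margin, and optimality. The final entropy bound then follows at once from Proposition~\ref{prop:dual} with $w_1=w_2=w_3=w_q$, $\eta=3d_q$ and $W=3$, since $\eta^2/(2W^2)=d_q^2/2$.

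\emph{Normalization.} Each of the $k$ ternary positions averages digit $1$ over $\{0,\ldots,q-1\}$, so $\sum_{a\ne0}s_3(a)=kq$. Hence
\[
\sum_{a\ne0}w_q(a)=\frac{3q^2-(q-1)(3q+4)}{3q-4}=\frac{4-q}{3q-4}=-d_q,
\]
and adding $w_q(0)=d_q$ gives $0$.

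\emph{Sup norm.} For $a\ne0$, $w_q(a)$ is increasing in $s_3(a)\in\{1,\ldots,2k\}$. At $s_3=2k$ it equals $(6q-3q-4)/(3q-4)=1$. At $s_3=1$ it is at least $-1$ exactly when $3q/k\ge8$, and this holds since $3^k\ge 8k/3$ for $k\ge2$. Also $0<d_q<1$, so $\lVert w_q\rVert_\infty=1$.

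\emph{Margin.} We use Lemma~\ref{lem:q3-digit-support} and split into three cases.
\begin{itemize}
\item At $(0,0,0)$ the sum is $3d_q$.
\item If $(u,v,w)\in\Gamma_q^\ast$ has no zero coordinate, then $s_3(u)+s_3(v)+s_3(w)=4k$. Since $w_q$ is affine in $s_3$ off $0$, the sum is
\[
\frac{12q}{3q-4}-\frac{3(3q+4)}{3q-4}=3d_q.
\]
\item If one coordinate is zero, say $u=0$, then $v_j+w_j=4$ forces $v=w=q-1$. The sum is then $d_q+2\ge3d_q$.
\end{itemize}

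\emph{Optimality.} This is the main step. I would prove it by exhibiting a probability distribution $\mu$ on $\Gamma_q$ each of whose marginals equals
\[
\nu:=(1-d_q)\,u_q+d_q\,\delta_{q-1},
\]
where $u_q$ is uniform. Given such $\mu$ and any certificate $(w_i,\eta)$, taking $\mu$-expectations of $\sum_i w_i(\alpha_i)\ge\eta$ and using $\mathbb E_{u_q}w_i=0$ gives
\[
\eta\le\sum_i\mathbb E_\nu w_i=d_q\sum_i w_i(q-1)\le d_qW.
\]

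Complementary slackness with the certificate above dictates where $\mu$ should live. It must be supported on the tight points, namely $(0,0,0)$ and the points of $\Gamma_q^\ast$ with no zero coordinate. The mass at $(0,0,0)$ must be $\nu(0)=(1-d_q)/q$. The remaining, $S_3$-symmetric part must then have marginal proportional to $(1-d_q)/q$ on each nonzero value except $q-1$, which carries the extra $d_q$.

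I would first check consistency of the expected digit sum, which is forced by the identity $s_3(u)+s_3(v)+s_3(w)=4k$ together with the affine form of $w_q$. Then I would build $\mu$ digitwise. Start from a product over positions of distributions on the six digit triples summing to $4$, namely the permutations of $(2,2,0)$ and $(2,1,1)$, suitably weighted. Mix this with a symmetric correction carried by triples in which one entry is $q-1$, and subtract the mass of triples with a zero coordinate.

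Making this mixture exactly hit $\nu$ for every $k\ge2$ is the step I expect to be the obstacle. If a clean explicit $\mu$ resists, the fallback is to argue by LP duality. The optimal value of $\eta/W$ equals $\min_\mu$ of the best zero-mean sup-norm-$1$ pairing with the marginals, and I would verify feasibility of the marginal system through a Hall/flow-type argument on the digit structure.
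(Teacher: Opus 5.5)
\textbf{There is a genuine gap: the distribution $\mu$ on which the optimality claim rests is never constructed.}

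What you do prove is correct:
\begin{itemize}
\item The normalization, $\lVert w_q\rVert_\infty=1$, and the three-case margin check are right. This includes the observation that a zero coordinate forces a permutation of $(0,q-1,q-1)$, which has weight sum $d_q+2\ge3d_q$.
\item The final entropy bound follows, with $\eta=3d_q$ and $W=3$.
\item Your reduction of optimality is sound. If some $\mu$ on $\Gamma_q$ has every marginal equal to $\nu=(1-d_q)u_q+d_q\delta_{q-1}$, then $\eta\le d_q\sum_i w_i(q-1)\le d_qW$. This handles non-symmetric certificates directly and avoids the symmetrization step the paper uses.
\item $\nu$ is exactly the right target: $\nu(b)=2/(3q-4)$ for $0\le b\le q-2$ and $\nu(q-1)=(q-2)/(3q-4)$.
\end{itemize}

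However, the existence of $\mu$ is the whole content of the optimality claim, and your proposal only flags it as an obstacle. The digitwise plan is unlikely to work as described. First, $\nu$ is not a product measure in the ternary digits: it is uniform plus an atom at the all-$2$ word, so a product over positions cannot produce it. Second, ``subtracting the mass of triples with a zero coordinate'' from a probability measure can create negative weights, and in any case changes the marginals. The LP/Hall fallback is only a gesture.

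\textbf{The missing idea.} No digit-level machinery is needed. Define $\mu$ as follows.
\begin{itemize}
\item Put mass $2/(3q-4)$ on $(0,0,0)$.
\item Distribute the remaining mass $(3q-6)/(3q-4)$ by choosing $a$ uniformly in $\{1,\dots,q-2\}$, then applying a uniformly random permutation of the three positions of $(a,q-1-a,q-1)$.
\end{itemize}
These triples lie in $\Gamma_q$ by Lemma~\ref{lem:q3-digit-support}, since their digits are $(a_j,2-a_j,2)$ with sum $4$. They have no zero coordinate, so they are tight for $w_q$, consistent with your complementary-slackness heuristic.

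Now compute the first marginal. A value $b\in\{1,\dots,q-2\}$ lands in the first coordinate in two disjoint ways: the position holding $a$ goes first with $a=b$, or the position holding $q-1-a$ goes first with $a=q-1-b$. This remains valid when $a=q-1-a$, because the two events concern different positions. Each way has probability $\frac{3}{3q-4}\cdot\frac13$, so the total is $2/(3q-4)$. The value $q-1$ appears only from the third position, with probability $\frac{3q-6}{3q-4}\cdot\frac13=\frac{q-2}{3q-4}$. The value $0$ has mass $2/(3q-4)$ from the origin. This is exactly $\nu$, by symmetry in every coordinate, and inserting it into your inequality completes the proof of $\eta/W\le d_q$.
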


\begin{proof}
In the list $0,1,\ldots,3^k-1$, each ternary digit takes the values $0,1,2$ equally often.  Hence
\[
 \sum_{a=0}^{q-1}s_3(a)=qk,
\]
and direct substitution gives $\sum_aw_q(a)=0$.  For $a\ne0$, the value of $w_q(a)$ is increasing with $s_3(a)$.  Its maximum is $1$, attained at $a=q-1$, while its minimum is at least $-1$ because $3q\ge8k$ for $q=3^k$, $k\ge2$.  Together with $0<d_q<1$, this proves $\lVert w_q\rVert_\infty=1$.

At $(0,0,0)$ the sum of the weights is $3d_q$.  If $(u,v,w)\in\Gamma_q^\ast$ has no zero coordinate, Lemma~\ref{lem:q3-digit-support} gives
\[
 w_q(u)+w_q(v)+w_q(w)
 =\frac{3q}{k(3q-4)}\,4k-3\frac{3q+4}{3q-4}=3d_q.
\]
At most one coordinate can be zero; in that case the special value $w_q(0)$ increases the sum by $4q/(3q-4)$.  Thus the certificate inequality holds.

For optimality, start from any certificate $(w_1,w_2,w_3)$ with margin $\eta_0$ and put $W_0=\sum_i\lVert w_i\rVert_\infty$.  Averaging over coordinate permutations gives the symmetric function
\[
 \overline w=\frac{w_1+w_2+w_3}{3},
 \qquad3\lVert\overline w\rVert_\infty\le W_0.
\]
Since $\eta_0>0$, one has $\overline w\not\equiv0$.  After normalization it is enough to consider one zero-mean function $w$ with $\lVert w\rVert_\infty\le1$ and
\[
 w(u)+w(v)+w(w)\ge\eta\quad((u,v,w)\in\Gamma_q),
 \qquad\eta\ge\frac{3\eta_0}{W_0}.
\]
Give mass $2/(3q-4)$ to $(0,0,0)$; with the remaining mass, choose $a$ uniformly from $\{1,\ldots,q-2\}$ and then a random permutation of $(a,q-1-a,q-1)$.  These triples belong to $\Gamma_q$ by Lemma~\ref{lem:q3-digit-support}.  Each marginal $\nu$ satisfies
\[
 \nu(b)=\frac{2}{3q-4}\quad(0\le b\le q-2),
 \qquad \nu(q-1)=\frac{q-2}{3q-4}.
\]
Averaging the certificate inequality and using $\sum_aw(a)=0$ gives
\[
 \eta\le3\sum_a\nu(a)w(a)
 =3\frac{q-4}{3q-4}w(q-1)\le3d_q.
\]
Therefore $\eta_0/W_0\le d_q$, and the displayed certificate attains equality.  Proposition~\ref{prop:dual} gives the entropy estimate.
\end{proof}

\subsection{The sharpened analytic base}

The preceding Pinsker estimate is completely explicit, but the same certificate gives a substantially sharper one-dimensional analytic bound.

\begin{proposition}[Sharpened analytic base]\label{prop:q3-sharp-base}
For $k\ge2$, define
\[
 C_k:=\inf_{x\ge1}
 \left\{
 1+x^{-4k/3}\bigl((1+x+x^2)^k-1\bigr)
 \right\}.
\]
Then
\[
 H(\Gamma_{3^k})\le\log C_k,
 \qquad C_k<3^k.
\]
In particular, with
\[
 x_0:=\frac{1+\sqrt{33}}4,
 \qquad
 \gamma:=(1+x_0+x_0^2)x_0^{-4/3}
          \approx2.755104613024,
\]
one has the fully explicit estimate
\[
 C_k
 \le1+\gamma^k-x_0^{-4k/3}
 <1+\gamma^k.
\]
\end{proposition}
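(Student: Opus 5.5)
The plan is a Gibbs-type (exponential tilting) bound that uses the digit-sum structure of Lemma~\ref{lem:q3-digit-support} directly, rather than going through Pinsker as in Proposition~\ref{prop:dual}. The key inequality is the following: for any probability distribution $\nu$ on $\{0,\ldots,q-1\}$ and any real function $f$,
\[
 H(\nu)\le\log\sum_{a}e^{f(a)}-\mathbb E_\nu f .
\]
This is nonnegativity of $D(\nu\Vert\pi_f)$, where $\pi_f\propto e^{f}$.

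Fix $x\ge1$, write $\theta=\log x\ge0$, and choose the tilt suggested by the certificate of Proposition~\ref{prop:q3-explicit-dual}:
\[
 f(a)=\theta s_3(a)\quad(a\ne0),\qquad f(0)=\tfrac{4k}{3}\theta .
\]
The partition function is then
\[
 Z(x)=\sum_{a\ne0}x^{s_3(a)}+x^{4k/3}=(1+x+x^2)^k-1+x^{4k/3},
\]
because $\sum_{a=0}^{q-1}x^{s_3(a)}=(1+x+x^2)^k$ digit by digit.

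The second step checks that $f(u)+f(v)+f(w)\ge4k\theta$ on all of $\Gamma_q$, again using Lemma~\ref{lem:q3-digit-support}.
\begin{itemize}
\item At $(0,0,0)$ the sum is $3\cdot\frac{4k}{3}\theta=4k\theta$.
\item On $\Gamma_q^\ast$ with no zero coordinate, the identity $s_3(u)+s_3(v)+s_3(w)=4k$ gives exactly $4k\theta$.
\item If one coordinate is zero, say $u=0$, then $s_3(0)=0$, so $s_3(v)+s_3(w)=4k$. The sum is therefore $4k\theta+\frac{4k}{3}\theta\ge4k\theta$, using $\theta\ge0$ (this is why the infimum is restricted to $x\ge1$).
\end{itemize}
Now take any $\mu\in\mathcal P(\Gamma_q)$ with marginals $\mu_i$. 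Bound the minimum by the average and apply the Gibbs inequality to each marginal:
\[
 \min_iH(\mu_i)\le\frac13\sum_iH(\mu_i)\le\log Z(x)-\frac13\mathbb E_\mu\Bigl[\sum_if(\alpha_i)\Bigr]\le\log Z(x)-\frac{4k}{3}\log x .
\]
The right-hand side equals $\log\bigl(1+x^{-4k/3}((1+x+x^2)^k-1)\bigr)$. Maximizing over $\mu$ and taking the infimum over $x\ge1$ gives $H(\Gamma_{3^k})\le\log C_k$.

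It remains to prove $C_k<3^k$ and the explicit estimate. Let $\phi(x)$ denote the bracketed function, so $\phi(1)=3^k$. Its derivative at $x=1$ is
\[
 -\tfrac{4k}{3}(3^k-1)+k\,3^{k-1}\cdot3=-\tfrac{k}{3}3^k+\tfrac{4k}{3},
\]
which is negative for $k\ge2$. Hence $\phi(x)<3^k$ for $x$ slightly larger than $1$, and so $C_k<3^k$. For the explicit form, evaluate at $x=x_0$, which minimizes $g(x)=(1+x+x^2)x^{-4/3}$: the condition $g'(x)=0$ reduces to $2x^2-x-4=0$, whose positive root is $x_0=(1+\sqrt{33})/4\approx1.69\ge1$, so $x_0$ is admissible. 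Then
\[
 \phi(x_0)=1+g(x_0)^k-x_0^{-4k/3}=1+\gamma^k-x_0^{-4k/3}<1+\gamma^k .
\]

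The only delicate step is the choice of $f(0)$. It must be at least $\frac{4k}{3}\theta$ so that the point $(0,0,0)$ meets the threshold. Larger values of $f(0)$ only inflate $Z$, so $\frac{4k}{3}\theta$ is the right choice, and it produces exactly the normalized constant term $1$ in $C_k$. The points with one zero coordinate are harmless because $x\ge1$.
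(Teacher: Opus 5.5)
Your proof is correct and is essentially the paper's argument. Under the substitution $x=e^{\theta}$ with $\theta=3q\lambda/(k(3q-4))$, your tilt $f$ is exactly the paper's $\lambda w_q$ shifted by the constant $\lambda(3q+4)/(3q-4)$, so the Gibbs bound, the function minimized, the derivative $k(4-3^k)/3$ at $x=1$, and the evaluation at $x_0$ all coincide. The only cosmetic differences are that you verify the certificate inequality directly from Lemma~\ref{lem:q3-digit-support} instead of citing Proposition~\ref{prop:q3-explicit-dual}, and that you average over the three marginals rather than picking one marginal with $\mathbb E_\nu w_q\ge d_q$.
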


\begin{proof}
Let $\mu$ be a probability distribution on $\Gamma_q$.  Since
\[
 \sum_{i=1}^3w_q(\alpha_i)\ge3d_q
\]
on the support, at least one marginal $\nu$ satisfies
$\mathbb E_\nu w_q\ge d_q$.  For every $\lambda\ge0$, the standard entropy variational inequality~\cite{CoverThomas} gives
\[
 H(\nu)
 \le\log\left(\sum_{a=0}^{q-1}e^{\lambda w_q(a)}\right)
      -\lambda d_q.
\]
Using the explicit formula for $w_q$ and setting
\[
 x=\exp\!\left(\frac{3q\lambda}{k(3q-4)}\right)\ge1,
\]
the right-hand side becomes the logarithm of
\[
 1+x^{-4k/3}\bigl((1+x+x^2)^k-1\bigr).
\]
As $\lambda$ ranges over $[0,\infty)$, this change of variables ranges bijectively over $x\in[1,\infty)$.  Taking the infimum proves the first assertion.  At $x=1$ this expression equals $3^k$, while its derivative there is
\[
 \frac{k(4-3^k)}3<0,
\]
so $C_k<3^k$.

Finally, $x_0$ is the positive minimizer on $[1,\infty)$ of
$(1+x+x^2)x^{-4/3}$.  Substitution gives
\[
 C_k
 \le1+x_0^{-4k/3}\bigl((1+x_0+x_0^2)^k-1\bigr)
 =1+\gamma^k-x_0^{-4k/3}.
\]
\end{proof}

\begin{remark}[Numerical values]\label{rem:q3-dual-certificate}
Since the minimization is one-dimensional, $C_k$ is one-dimensional can be computed numerically also for large $k$.  Some values, including cases beyond $k=5$, are as follows.  The column labelled ``dual--Pinsker'' is the explicit base
$3^k\exp(-d_{3^k}^2/2)$ obtained by combining the certificate with optimal normalized margin from Proposition~\ref{prop:q3-explicit-dual} with Pinsker's inequality; the sharper column is $C_k$.
\[
\begin{array}{c|r|c|r|r}
 k & q=3^k & 3d_q & \text{dual--Pinsker} & C_k\\
\hline
 2 & 9    & 0.652173913 & 8.789827507 & 8.311438619\\
 3 & 27   & 0.896103896 & 25.821968790 & 21.785657318\\
 4 & 81   & 0.966527197 & 76.903435734 & 58.555178033\\
 5 & 243  & 0.988965517 & 230.148597638 & 159.710734223\\
 6 & 729  & 0.996335318 & 689.884793146 & 438.334118263\\
 7 & 2187 & 0.998779930 & 2069.093661669 & 1205.935801600\\
 8 & 6561 & 0.999593475 & 6206.720366102 & 3320.741398379\\
 9  & 19683    & 0.999864510 & 18619.600512887 & 9147.243417100 \\
10 & 59049    & 0.999954839 & 55858.240964466 & 25199.861703687 \\
11 & 177147   & 0.999984947 & 167574.162322948 & 69426.502252124 \\
12 & 531441   & 0.999994982 & 502721.926399646 & 191275.522577324 \\
13 & 1594323  & 0.999998327 & 1508165.218630154 & 526982.320035362 \\
14 & 4782969  & 0.999999443 & 4524495.095321819 & 1451889.666070144 \\
15 & 14348907 & 0.999999814 & 13573484.725396864 & 4000106.161617847 \\

\end{array}
\]
The normalized LP for the dual certificate has optimal margin $3d_q$; the displayed analytic certificate therefore removes the need to solve a new linear program for each $k$.  For $2\le k\le5$, the values of $C_k$ agree numerically, to the precision reported in~\cite{CostaDellaFiore}, with the corresponding field-by-field support-entropy optimizations contained in the published article.  That article reported the entropy bounds
\[
 2.118,\qquad 3.082,\qquad 4.070,\qquad 5.074
\]
for $k=2,3,4,5$, respectively, and hence the exponential bases
\[
 8.315,\qquad 21.802,\qquad 58.557,\qquad 159.812.
\]
By comparison,
\[
 \log C_2\approx2.117632713,\quad
 \log C_3\approx3.081251832,\quad
 \log C_4\approx4.069969524,\quad
 \log C_5\approx5.073364268.
\]
The entropy bounds reported in~\cite{CostaDellaFiore} are obtained by rounding these values upward to three decimal places.  Thus the smaller decimals in the present table should not be interpreted as evidence of a different support-entropy optimum in these four cases.  The gain is instead a uniform one-dimensional analytic formula valid for every $k\ge2$, avoiding field-by-field convex optimization.  In particular, while the field-by-field computation in~\cite{CostaDellaFiore} was carried out only for $2\le k\le5$ and was reported to become impractical beyond $q=243$, the table above also provides explicit values for $6\le k\le15$.  Numerically, the optimizing $x$ converges rapidly to $x_0$.
\end{remark}

\begin{lemma}[Zero-weight diagonal extraction]\label{lem:q3-diagonal-extraction}
Let $q=3^k$, and let $S$ be a sequence in $\F_q^n$ containing no three distinct indices on which $e_2$ vanishes coordinatewise.  Then there is a set $A_0\subseteq\F_q^n$ with
\[
 |A_0|\ge\frac{|S|}{2(n+1)}
\]
such that $\tau_q^{\boxt n}$ restricts to a diagonal tensor with nonzero diagonal on $A_0^3$.
\end{lemma}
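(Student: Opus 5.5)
The plan is to build $A_0$ in two reductions: first discard repeated values, then keep a single Hamming-weight class. After that, check directly that $\tau_q^{\boxt n}$ is diagonal on $A_0^3$. Recall that $\tau_q^{\boxt n}(x,y,z)=\prod_j\bigl(1-e_2(x_j,y_j,z_j)^{q-1}\bigr)$. It equals $1$ when $e_2(x,y,z)=0$ coordinatewise and $0$ otherwise. So it suffices to show two things: the diagonal value is nonzero, and no non-constant triple in $A_0^3$ is a coordinatewise zero of $e_2$.

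The diagonal is automatic. Since $e_2(a,a,a)=3a^2=0$ in characteristic three, $\tau_q^{\boxt n}(x,x,x)=1$ for every $x$.

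\textbf{Step 1 (multiplicities).} The same identity $e_2(x,x,x)=0$ shows that no value occurs three times in $S$. Otherwise three distinct indices carrying it would give a forbidden solution. Hence the set $V$ of distinct values of $S$ satisfies $|V|\ge|S|/2$.

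\textbf{Step 2 (weight stratification).} Partition $V$ according to the Hamming weight $|\supp x|\in\{0,\ldots,n\}$. Let $A_0$ be a largest class, so $|A_0|\ge|V|/(n+1)\ge|S|/(2(n+1))$.

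\textbf{Step 3 (off-diagonal vanishing).} Take a non-constant triple $(x,y,z)\in A_0^3$. Since $e_2$ is symmetric, there are two cases.

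\emph{All three entries distinct.} They are distinct values of $S$, so they come from three distinct indices. The hypothesis then says $e_2(x,y,z)$ does not vanish coordinatewise.

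\emph{Up to permutation, $(x,x,y)$ with $x\ne y$.} Here a repeated value need not come from distinct indices, so the hypothesis says nothing, and this is where the weight restriction is needed. Coordinatewise,
\[
 e_2(x_j,x_j,y_j)=x_j^2+2x_jy_j=x_j(x_j-y_j)
\]
in characteristic three. If this vanished for every $j$, then $y_j=x_j$ on $\supp x$, so $\supp y\supseteq\supp x$. Equal weights force $\supp y=\supp x$, and $y$ agrees with $x$ there and vanishes off it. Thus $y=x$, a contradiction. Hence some coordinate gives $e_2\ne0$, and $\tau_q^{\boxt n}(x,x,y)=0$.

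So the restriction to $A_0^3$ is diagonal with diagonal value $1$. The only step needing an idea is the partial-diagonal case $(x,x,y)$, which the pigeonhole on Hamming weight resolves at the cost of the factor $n+1$.
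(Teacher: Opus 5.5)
Your proof is correct and follows the paper's argument step for step. You discard multiplicities using $e_2(x,x,x)=0$, pigeonhole on the number of zero coordinates (equivalently, Hamming weight) at the cost of a factor $n+1$, and rule out the partial diagonals $(x,x,y)$ via $e_2(x_j,x_j,y_j)=x_j(x_j-y_j)$ together with equality of supports.
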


\begin{proof}
Every value occurs at most twice in $S$, since $e_2(x,x,x)=3x^2=0$ in characteristic three.  {Let $A$ be the set of distinct values appearing in $S$.  Then $|A|\ge|S|/2$.}  Partition $A$ according to the number of zero coordinates and choose a class $A_0$ of size at least $|A|/(n+1)$.

Tuples of three pairwise distinct elements of $A_0$ do not annihilate $e_2$ by hypothesis.  Suppose next that $x,z\in A_0$ are distinct.  In characteristic three,
\[
 e_2(x_i,x_i,z_i)=x_i^2+2x_iz_i=x_i(x_i-z_i).
\]
If $e_2(x,x,z)$ vanished coordinatewise, then $z_i=x_i$ wherever $x_i\ne0$.  Since $x$ and $z$ have the same number of zero coordinates, this would force $z=x$, a contradiction.  The same argument applies to the other two partial diagonals.  Finally, constant triples lie in the zero set of $e_2$, so $\tau_q^{\boxt n}$ has nonzero diagonal on $A_0^3$.
\end{proof}

Lemma~\ref{lem:q3-diagonal-extraction}, Corollary~\ref{cor:subexp-extraction} with $M_n=2(n+1)$, and Proposition~\ref{prop:q3-sharp-base} yield the following explicit refinement of the result from the arXiv version of~\cite{CostaDellaFiore}.  Proposition~\ref{prop:q3-explicit-dual} gives a closed-form separating certificate with optimal normalized margin, so no field-by-field linear program is needed.

\begin{corollary}\label{cor:original-310}
Let $q=3^k$ with $k>1$, and let $C_k$ be as in Proposition~\ref{prop:q3-sharp-base}.  Then
\[
 \EGZ(3,\F_q^n,2)\le C_k^{\,n+o(n)},
 \qquad C_k<q.
\]
In particular,
\[
 \EGZ(3,\F_{3^k}^n,2)
 \le\bigl(1+\gamma^k-x_0^{-4k/3}\bigr)^{n+o(n)},
\]
with $x_0$ and $\gamma$ as in Proposition~\ref{prop:q3-sharp-base}.
\end{corollary}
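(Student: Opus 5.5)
The argument chains the three preceding results: the extraction Lemma~\ref{lem:q3-diagonal-extraction}, the entropy bound of Proposition~\ref{prop:q3-sharp-base}, and the extraction principle Corollary~\ref{cor:subexp-extraction}.

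We begin with the combinatorial side. Let $S$ be a sequence in $\F_q^n$ with no length-three subsequence on which $e_2$ vanishes coordinatewise. We must show that $|S|\le C_k^{\,n+o(n)}$, since the EGZ constant is one more than the maximal length of such a sequence. Take $\mathcal A_n=S$ with $|\mathcal A_n|=|S|$. Lemma~\ref{lem:q3-diagonal-extraction} then supplies a set $A_0=A_n\subseteq\F_q^n$ with $|A_n|\ge |S|/M_n$, where $M_n=2(n+1)$. On this set $\tau_q^{\boxt n}$ restricts to a diagonal tensor with nonzero diagonal on $A_n^3$. Clearly $\log M_n=O(\log n)=o(n)$.

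Next comes the tensor side. The scalar tensor $\tau_q:\F_q^3\to\F_q$ is fixed and nonzero, and its monomial-basis support is $\Gamma_q$. Corollary~\ref{cor:subexp-extraction} therefore gives
\[
|S|\le 2(n+1)(n+1)^{|\Gamma_q|}\exp\bigl(nH(\Gamma_q)\bigr).
\]
One technical point must be checked: the monomials $x^0,\ldots,x^{q-1}$ form a basis of $\F_q^{\F_q}$, so they give a product basis as required by Theorem~\ref{thm:support-entropy}. The exponents of $\tau_q$ are already reduced, since $a+c\le q-1$, so the expansion of $\tau_q$ is its basis expansion. Inserting $H(\Gamma_q)\le\log C_k$ from Proposition~\ref{prop:q3-sharp-base} gives $|S|\le C_k^{\,n+o(n)}$, because $|\Gamma_q|$ depends only on $q$. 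Adding $1$ and absorbing it into the $o(n)$ term yields $\EGZ(3,\F_q^n,2)\le C_k^{\,n+o(n)}$. The strict inequality $C_k<q$ is part of Proposition~\ref{prop:q3-sharp-base}.

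For the explicit form, we use $C_k\le 1+\gamma^k-x_0^{-4k/3}$ from the same proposition. Since $t\mapsto t^{n+o(n)}$ is monotone for $t\ge1$, substituting this upper bound gives the displayed estimate.

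No step here is a genuine obstacle, since all the work sits in the cited results. The only points needing care are the bookkeeping ones: that the polynomial loss $M_n$ and the type count are absorbed into the $o(n)$ term, and that the support in Theorem~\ref{thm:support-entropy} is taken with respect to the reduced monomial basis, so that it coincides with the set $\Gamma_q$ analysed in Lemma~\ref{lem:q3-digit-support}.
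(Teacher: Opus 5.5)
Your proposal is correct and is exactly the paper's argument: the corollary is obtained by feeding Lemma~\ref{lem:q3-diagonal-extraction} into Corollary~\ref{cor:subexp-extraction} with $M_n=2(n+1)$, and then inserting $H(\Gamma_q)\le\log C_k$ together with $C_k\le 1+\gamma^k-x_0^{-4k/3}$ from Proposition~\ref{prop:q3-sharp-base}. Your extra bookkeeping is sound: the exponents are already reduced, so the support is $\Gamma_q$, and the polynomial factors can be absorbed into the $o(n)$ term because $C_k>1$.
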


{
Corollary~\ref{cor:original-310} gives a strictly smaller exponential base than $1+\gamma^k$, and hence improves the bound from~\cite[Theorem~4.3]{CostaDellaFiore} $$(1+\gamma^k)^{n+o(n)}.$$}

\section{Partition-rank criteria for distinct-variable avoidance}\label{sec:partition-criteria}

Strong freeness makes the tensor detecting coordinatewise polynomial zeros diagonal and permits a direct slice-rank argument.  Pairwise distinct-variable avoidance is subtler because every nontrivial equality profile creates a partial diagonal.  The partition-rank diagonalization used in this section is due to Naslund~\cite{NaslundPartition}, within the partition-lattice framework developed by Omar~\cite{OmarPartition}.  In the formulation developed here, each equality profile is associated with the tensor detecting the coordinatewise zeros of the corresponding polynomial contraction.  Lemma~\ref{lem:lifting} and Theorem~\ref{thm:distinct-reduction} reduce the global distinct-variable problem to these local tensors, {which are then controlled by degree bounds or eliminated when the corresponding contractions have only diagonal zeros, particularly on restricted alphabets.}
\subsection{Distinctness indicators and contracted tensors}
{ Naslund introduced partition rank and proved its diagonal lemma~\cite[Lemma~11]{NaslundPartition}.  His modified distinctness indicator is given in~\cite[Lemmas~14--15]{NaslundPartition}; grouping permutations by their cycle partitions yields the equality-profile expansion in~\cite[Eq.~(3.2)]{NaslundPartition}.}  The same coefficients are the M\"obius coefficients of the partition lattice, as reviewed and generalized by Omar~\cite[Section~2.1 and Theorem~8]{OmarPartition}.  We use Naslund's normalization because its coefficients interact explicitly with polynomial contractions.  Over a field $\mathbb K$, a tensor $T:X_1\times\cdots\times X_r\to\mathbb K$ has \emph{partition rank one} if, for some nonempty proper subset $I\subset[r]$, it factors as
\[
 T(x_1,\ldots,x_r)=
 f((x_i)_{i\in I})g((x_i)_{i\notin I}).
\]
Its \emph{partition rank} $\prk(T)$ is the least number of partition-rank-one tensors whose sum is $T$.  Every slice has partition rank one, so
\[
 \prk(T)\le\srk(T).
\]
For tensors of order two, both notions are ordinary matrix rank.  We shall use the partition-rank version of the diagonal-tensor lemma, due to Naslund~\cite[Lemma~11]{NaslundPartition}: if $D$ is diagonal with $|A|$ nonzero diagonal entries, then
\[
 \prk(D)=|A|.
\]

Let $\Pi_t$ be the lattice of set partitions of $[t]$, ordered by refinement, and write $|\pi|$ for the number of blocks of $\pi\in\Pi_t$.  Its least and greatest elements are
\[
 \widehat0=\bigl\{\{1\},\ldots,\{t\}\bigr\},
 \qquad
 \widehat1=\bigl\{[t]\bigr\}.
\]
Let $X$ be any set and let $\mathbb K$ be a field.  For $\pi\in\Pi_t$, define the equality indicator
\[
 \Delta_\pi:X^t\longrightarrow\mathbb K,
 \qquad
 \Delta_\pi(x_1,\ldots,x_t)=
 \prod_{B\in\pi}\1_{\{x_i:i\in B\}\text{ are all equal}},
\]
where $\1_E\in\{0,1\}\subseteq\mathbb K$ denotes the indicator of the condition $E$.  If $G:X^t\to\mathbb K$ is a tensor, then $\Delta_\pi G$ always denotes the pointwise product
\[
 (\Delta_\pi G)(\mathbf x)=\Delta_\pi(\mathbf x)G(\mathbf x).
\]
The \emph{equality partition} of $\mathbf x=(x_1,\ldots,x_t)$ is the partition of $[t]$ whose blocks are the fibers of the map $i\mapsto x_i$.  Thus $\Delta_\pi(\mathbf x)=1$ if and only if $\pi$ refines the equality partition of $\mathbf x$.  In the applications below we take $\mathbb K=\F_q$ and $X=\F_q^n$, so $\Delta_\pi$ and the coordinatewise polynomial tensors have the same domain $(\F_q^n)^t$.

We use Naslund's distinctness indicator in the following normalization~\cite[Lemma~15]{NaslundPartition}.  Let $S_t$ denote the symmetric group on $[t]$.  If $\sigma\in S_t$, let $f_\sigma:X^t\to\mathbb K$ be the indicator that $\mathbf x$ is fixed by $\sigma$, and let $\mathrm{Cyc}_t$ be the set of $t$-cycles.  Define $H_t:X^t\to\mathbb K$ by
\[
 H_t(\mathbf x)=
 \sum_{\sigma\in S_t\setminus\mathrm{Cyc}_t}
 \operatorname{sgn}(\sigma)f_\sigma(\mathbf x).
\]

\begin{lemma}[Naslund's distinctness indicator]\label{lem:naslund-indicator}
For every $\mathbf x=(x_1,\ldots,x_t)$,
\[
 H_t(\mathbf x)=
 \begin{cases}
 1,&x_1,\ldots,x_t\text{ are pairwise distinct},\\
 (-1)^t(t-1)!,&x_1=\cdots=x_t,\\
 0,&\text{otherwise}.
 \end{cases}
\]
Moreover,
\[
 H_t=\sum_{\pi\in\Pi_t\setminus\{\widehat1\}}c_\pi\Delta_\pi,
\qquad
 c_\pi=(-1)^{t-|\pi|}\prod_{B\in\pi}(|B|-1)!.
\]
\end{lemma}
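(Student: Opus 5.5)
The proof will group permutations by their cycle partitions. For $\sigma\in S_t$, let $\pi(\sigma)\in\Pi_t$ be the partition of $[t]$ into the orbits (cycles) of $\sigma$. A tuple $\mathbf x$ is fixed by $\sigma$ exactly when $x_i$ is constant on each cycle, so $f_\sigma=\Delta_{\pi(\sigma)}$.

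Next we compute the coefficients. The sign of $\sigma$ is $(-1)^{t-|\pi(\sigma)|}$. The number of permutations with a prescribed cycle partition $\pi$ is $\prod_{B\in\pi}(|B|-1)!$, the number of cyclic orders on each block. The permutations excluded from $H_t$, namely the $t$-cycles, are exactly those with $\pi(\sigma)=\widehat1$. Summing over $\sigma\in S_t\setminus\mathrm{Cyc}_t$ grouped by $\pi(\sigma)$ therefore gives
\[
 H_t=\sum_{\pi\ne\widehat1}(-1)^{t-|\pi|}\prod_{B\in\pi}(|B|-1)!\,\Delta_\pi=\sum_{\pi\ne\widehat1}c_\pi\Delta_\pi,
\]
which is the second assertion.

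For the pointwise values, fix $\mathbf x$ with equality partition $\epsilon$. Then $\Delta_\pi(\mathbf x)=1$ exactly when $\pi\le\epsilon$, so $\sum_{\sigma\in S_t}\operatorname{sgn}(\sigma)f_\sigma(\mathbf x)$ is a signed count of the permutations $\sigma$ with $\pi(\sigma)\le\epsilon$. These are precisely the permutations preserving every block of $\epsilon$, and they form the Young subgroup $\prod_{E\in\epsilon}S_E$. The signed sum over a symmetric group $S_m$ is $0$ when $m\ge2$ and $1$ when $m=1$. Hence the full sum over $S_t$ equals $1$ if $\epsilon=\widehat0$ (all $x_i$ distinct) and $0$ otherwise.

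It remains to subtract the $t$-cycle contribution $\sum_{\sigma\in\mathrm{Cyc}_t}\operatorname{sgn}(\sigma)f_\sigma(\mathbf x)$. A $t$-cycle fixes $\mathbf x$ only when $\epsilon=\widehat1$, that is, when all $x_i$ are equal. In that case the contribution is $(t-1)!\,(-1)^{t-1}$. This yields the three cases:
\begin{itemize}
 \item if the $x_i$ are pairwise distinct, then $H_t(\mathbf x)=1-0=1$ (for $t\ge2$, a distinct tuple is not constant);
 \item if $x_1=\cdots=x_t$, then $H_t(\mathbf x)=0-(-1)^{t-1}(t-1)!=(-1)^t(t-1)!$;
 \item in every other case both sums vanish, so $H_t(\mathbf x)=0$.
\end{itemize}

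There is no genuine obstacle here. The only points needing care are the sign convention $\operatorname{sgn}(\sigma)=(-1)^{t-\#\text{cycles}}$ and the observation that the permutations with $\pi(\sigma)\le\epsilon$ form exactly the Young subgroup of $\epsilon$, whose signed sum vanishes unless $\epsilon$ is discrete. All signs and integer counts are read in $\mathbb K$, where they remain meaningful in any characteristic.
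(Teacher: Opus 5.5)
Your proof is correct and follows the paper's argument. You obtain the expansion by grouping permutations by cycle partition, with $f_\sigma=\Delta_{\pi(\sigma)}$, and the pointwise values from the vanishing signed sum over the stabilizer of $\mathbf x$, after subtracting the $(t-1)!$ cycles of sign $(-1)^{t-1}$. Identifying that stabilizer as the Young subgroup of the equality partition is only a rephrasing of the paper's transposition-pairing step, and like the paper you implicitly need $t\ge2$.
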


\begin{proof}
{Indeed,
\[
 \sum_{\sigma\in S_t}\operatorname{sgn}(\sigma)f_\sigma(x)
 =
 \sum_{\sigma\in\operatorname{Stab}(x)}\operatorname{sgn}(\sigma).
\]
If the entries of \(x\) are pairwise distinct, then the stabilizer is
trivial and this sum equals \(1\). Otherwise, a transposition exchanging
two equal entries belongs to the stabilizer, and multiplication by this
transposition pairs its elements with opposite signs. Hence the sum is
zero. }

Removing the $t$-cycles changes the value only when the equality partition is $\widehat1$, because a $t$-cycle can fix $\mathbf x$ only when all entries are equal.  There are $(t-1)!$ such cycles, all of sign $(-1)^{t-1}$.  Since the alternating sum over the whole symmetric group is zero on a constant tuple, the remaining sum is
\[
 -(-1)^{t-1}(t-1)!=(-1)^t(t-1)!.
\]
This proves the values of $H_t$.

For a permutation whose cycles are the blocks of $\pi$, one has $f_\sigma=\Delta_\pi$.  On a block $B$ of size $b$, there are $(b-1)!$ cycles and each has sign $(-1)^{b-1}$.  Multiplying over blocks gives the displayed coefficient.  The one-block partition is absent because the $t$-cycles were removed.
\end{proof}

Let $F\in\F_q[x_1,\ldots,x_t]$ satisfy $F(a,\ldots,a)=0$ for every $a\in\F_q$.  Its scalar zero indicator and its coordinatewise tensor power are
\[
 \tau_F:\F_q^t\longrightarrow\F_q,
 \qquad
 \tau_F=1-F^{q-1},
\]
\[
 \Tau_{F,n}:=\tau_F^{\boxt n}:(\F_q^n)^t\longrightarrow\F_q.
\]
Thus $\Tau_{F,n}(x^{(1)},\ldots,x^{(t)})=1$ precisely when
$F(x^{(1)},\ldots,x^{(t)})=0$ in every coordinate.

For a partition $\pi=\{B_1,\ldots,B_r\}$, choose an ordering of its blocks and define the contraction
\[
 F_\pi(y_1,\ldots,y_r)
 :=F(z_1,\ldots,z_t),
 \qquad z_j=y_i\text{ if }j\in B_i.
\]
Set
\[
 \tau_{F,\pi}:=1-F_\pi^{q-1}:\F_q^r\longrightarrow\F_q,
 \qquad
 \Tau_{F,\pi,n}:=\tau_{F,\pi}^{\boxt n}:(\F_q^n)^r\longrightarrow\F_q.
\]
Reordering the blocks only permutes the variables of these tensors and therefore does not affect their slice or partition rank.

{We now relate each equality-profile term in Naslund's expansion to the
tensor associated with the corresponding polynomial contraction.}
\begin{lemma}[Lifting contractions]\label{lem:lifting}
If $\pi$ has at least two blocks, then
\[
 \prk(\Delta_\pi\Tau_{F,n})
 \le\prk(\Tau_{F,\pi,n})
 \le\srk(\Tau_{F,\pi,n}).
\]
\end{lemma}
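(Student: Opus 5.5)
The second inequality $\prk(\Tau_{F,\pi,n})\le\srk(\Tau_{F,\pi,n})$ is immediate, since every slice has partition rank one. So the work is the first inequality. I would prove it by writing $\Delta_\pi\Tau_{F,n}$ as the pullback of $\Tau_{F,\pi,n}$ along the block map, and then pulling back a partition-rank decomposition term by term.

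Write $\pi=\{B_1,\ldots,B_r\}$ with $r\ge2$, and pick a representative $j_i\in B_i$ for each block. The first step is the pointwise identity
\[
 (\Delta_\pi\Tau_{F,n})(x^{(1)},\ldots,x^{(t)})
 =\Delta_\pi(x^{(1)},\ldots,x^{(t)})\,
 \Tau_{F,\pi,n}(x^{(j_1)},\ldots,x^{(j_r)}).
\]
If $\Delta_\pi(\mathbf x)=1$, then $x^{(j)}=x^{(j_i)}$ for every $j\in B_i$. In each coordinate, $F$ evaluated at the $x^{(j)}$ is then $F_\pi$ evaluated at the representatives, so $\tau_F=\tau_{F,\pi}$ coordinatewise and the $n$-fold products agree. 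If $\Delta_\pi(\mathbf x)=0$, both sides vanish.

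Next, $\Delta_\pi$ factors over blocks as $\Delta_\pi=\prod_{i=1}^r\delta_{B_i}$, where $\delta_{B_i}$ is the indicator that all $x^{(j)}$ with $j\in B_i$ are equal. Take a decomposition $\Tau_{F,\pi,n}=\sum_{s=1}^R f_s g_s$ with $R=\prk(\Tau_{F,\pi,n})$. Here $f_s$ depends on the variables $y_i$ with $i\in I_s$, and $g_s$ on those with $i\notin I_s$, for a nonempty proper subset $I_s\subset[r]$.

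Put $J_s=\bigcup_{i\in I_s}B_i$. This is a nonempty proper subset of $[t]$, because the blocks are nonempty and $I_s$ is nonempty and proper. Then
\[
 \Delta_\pi\Tau_{F,n}=\sum_{s=1}^R
 \Bigl(\prod_{i\in I_s}\delta_{B_i}\cdot f_s\bigl((x^{(j_i)})_{i\in I_s}\bigr)\Bigr)
 \Bigl(\prod_{i\notin I_s}\delta_{B_i}\cdot g_s\bigl((x^{(j_i)})_{i\notin I_s}\bigr)\Bigr).
\]
The first factor depends only on the variables indexed by $J_s$, and the second only on those indexed by $[t]\setminus J_s$. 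Each summand therefore has partition rank one, which gives $\prk(\Delta_\pi\Tau_{F,n})\le R$.

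The only point needing care is that each equality indicator $\delta_{B_i}$ lies entirely on one side of the cut. This holds because the cut $J_s$ is a union of whole blocks. It is also where the hypothesis of at least two blocks enters: for $\pi=\widehat1$ no proper cut exists.
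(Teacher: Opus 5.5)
Your proposal is correct and follows essentially the same route as the paper. Both pull $\Tau_{F,\pi,n}$ back through block representatives on the locus $\Delta_\pi=1$, then lift each partition-rank-one term along the cut $\bigcup_{i\in I_s}B_i$, placing each block's equality indicator on the side containing that block; the only cosmetic difference is that you write $\Delta_\pi=\prod_i\delta_{B_i}$, while the paper writes each block factor as $\prod_{j\in B_i}\mathbf 1_{x^{(j)}=x^{(b_i)}}$, which is the same function.
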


\begin{proof}
Write $\pi=\{B_1,\ldots,B_r\}$ with $r\ge2$, and choose a representative $b_i\in B_i$ for each block.  { On every tuple for which \(\Delta_\pi=1\), all variables in \(B_i\)
equal \(x^{(b_i)}\), and hence} the definitions of contraction and coordinatewise tensor power give
\[
 \Tau_{F,n}(x^{(1)},\ldots,x^{(t)})
 =\Tau_{F,\pi,n}(x^{(b_1)},\ldots,x^{(b_r)}).
\]
Equivalently, for all tuples of vector variables,
\[
 \Delta_\pi\Tau_{F,n}
 =\Delta_\pi\,
 \Tau_{F,\pi,n}(x^{(b_1)},\ldots,x^{(b_r)}),
\]
where the second factor on the right is the pullback of the contracted tensor through the representative coordinates.

Suppose
\[
 \Tau_{F,\pi,n}(y_1,\ldots,y_r)
 =\sum_{s=1}^R
 f_s((y_i)_{i\in I_s})\,
 g_s((y_i)_{i\notin I_s})
\]
is a partition-rank decomposition, with $\varnothing\ne I_s\subsetneq[r]$.  Let
$U_s=\bigcup_{i\in I_s}B_i$, a nonempty proper subset of $[t]$.  Define
\[
 \widetilde f_s((x^{(j)})_{j\in U_s})
 :=f_s((x^{(b_i)})_{i\in I_s})
 \prod_{i\in I_s}\prod_{j\in B_i}
 \1_{x^{(j)}=x^{(b_i)}},
\]
\[
 \widetilde g_s((x^{(j)})_{j\notin U_s})
 :=g_s((x^{(b_i)})_{i\notin I_s})
 \prod_{i\notin I_s}\prod_{j\in B_i}
 \1_{x^{(j)}=x^{(b_i)}}.
\]
{ Since \(I_s\) and \([r]\setminus I_s\) partition the set of blocks,
the product of the two equality factors is
\[
 \prod_{i=1}^r\prod_{j\in B_i}
 \mathbf 1_{x^{(j)}=x^{(b_i)}}
 =\Delta_\pi.
\]}  Therefore
\[
 \Delta_\pi\Tau_{F,n}
 =\sum_{s=1}^R\widetilde f_s\widetilde g_s.
\]
Each summand has partition rank one with respect to the bipartition
$U_s\mid([t]\setminus U_s)$.  Thus
$\prk(\Delta_\pi\Tau_{F,n})\le R$, and minimizing over decompositions proves the first inequality.  The second follows because every slice is a partition-rank-one tensor.
\end{proof}

The diagonalization step in the next theorem is Naslund's distinctness-indicator argument and is also a specialization of Omar's general partition-indicator theorem~\cite[Theorem~8]{OmarPartition}.  The contracted-tensor right-hand side is the profile-wise reformulation used in this paper.

\begin{theorem}[Distinct-variable reduction via contractions]\label{thm:distinct-reduction}
Let $q$ have characteristic $p$, assume $2\le t\le p$, and let $F\in\F_q[x_1,\ldots,x_t]$ satisfy
\[
 F(a,\ldots,a)=0\qquad(a\in\F_q).
\]
If $A\subseteq\F_q^n$ contains no $t$ pairwise distinct elements satisfying $F=0$ coordinatewise, then
\[
 |A|
 \le
 \sum_{\pi\in\Pi_t\setminus\{\widehat1\}}
 \srk(\Tau_{F,\pi,n}).
\]
Consequently, if a sequence $S$ contains no length-$t$ subsequence on which $F$ vanishes, then
\[
 |S|
 \le
 (t-1)
 \sum_{\pi\in\Pi_t\setminus\{\widehat1\}}
 \srk(\Tau_{F,\pi,n}).
\]
\end{theorem}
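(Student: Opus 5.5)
Consider the tensor $H_t\Tau_{F,n}$ on $A^t$, where $H_t$ is Naslund's distinctness indicator from Lemma~\ref{lem:naslund-indicator}. The plan is to bound its partition rank in two ways and compare.

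\emph{Lower bound.} By Lemma~\ref{lem:naslund-indicator}, $H_t$ vanishes on tuples that are neither pairwise distinct nor constant. On pairwise distinct tuples from $A$, the hypothesis on $A$ gives $\Tau_{F,n}=0$. On constant tuples $(a,\ldots,a)$ we have $F(a,\ldots,a)=0$ in every coordinate, so $\Tau_{F,n}=1$ there. Hence the restriction of $H_t\Tau_{F,n}$ to $A^t$ is diagonal with diagonal value $(-1)^t(t-1)!$. This is the step where the hypothesis $t\le p$ is used: since $t-1<p$, the scalar $(t-1)!$ is nonzero in $\F_q$. Naslund's diagonal lemma then gives $\prk=|A|$.

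\emph{Upper bound.} By the expansion in Lemma~\ref{lem:naslund-indicator},
\[
 H_t\Tau_{F,n}=\sum_{\pi\ne\widehat1}c_\pi\,\Delta_\pi\Tau_{F,n}.
\]
Every $\pi\ne\widehat1$ has at least two blocks, so Lemma~\ref{lem:lifting} bounds each term by $\srk(\Tau_{F,\pi,n})$. Partition rank is subadditive, and scalar multiplication does not increase it; terms with $c_\pi=0$ in $\F_q$ contribute nothing. Restricting to $A^t$ does not increase partition rank, since restriction is applied factorwise in a decomposition. Combining the two bounds gives the first inequality.

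\emph{Sequence version.} Let $S$ be a sequence with no length-$t$ zero subsequence, and let $A$ be its set of distinct values. Then $A$ has no $t$ pairwise distinct elements satisfying $F=0$ coordinatewise, since such elements would form a zero subsequence. Moreover, each value occurs at most $t-1$ times in $S$: otherwise some $t$ indices share one value, and $F(a,\ldots,a)=0$ would give a zero subsequence. Therefore $|S|\le(t-1)|A|$, and the second inequality follows from the first.

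The main point needing care is the nonvanishing of the diagonal coefficient $(-1)^t(t-1)!$ in characteristic $p$, which is exactly why the theorem assumes $t\le p$. The rest is bookkeeping with Lemma~\ref{lem:lifting}.
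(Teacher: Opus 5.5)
Your proposal is correct and follows the paper's proof essentially step for step. You show that $H_t\Tau_{F,n}$ restricts to a diagonal tensor on $A^t$ with diagonal value $(-1)^t(t-1)!$, nonzero because $t\le p$, apply Naslund's partition-rank diagonal lemma, and bound the partition rank from above through the equality-profile expansion, subadditivity and Lemma~\ref{lem:lifting}; your passage from sequences to sets via the multiplicity bound $t-1$, using $F(a,\ldots,a)=0$, is also exactly the paper's argument.
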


\begin{proof}
Consider
\[
 J=H_t\Tau_{F,n}.
\]
On $A^t$, a tuple of pairwise distinct elements has
$\Tau_{F,n}=0$ by hypothesis; a partial diagonal has $H_t=0$;
and a constant tuple has
\[
 J(x,\ldots,x)=(-1)^t(t-1)!.
\]
Since $t\le p$, this scalar is nonzero in $\F_q$. Hence
$J|_{A^t}$ is diagonal with nonzero diagonal. By Naslund's
partition-rank diagonal lemma,
\[
 |A|\le\prk(J).
\]

By Lemma~\ref{lem:naslund-indicator}, with the integer coefficients
$c_\pi$ viewed in $\F_q$,
\[
 J
 =H_t\Tau_{F,n}
 =\sum_{\pi\in\Pi_t\setminus\{\widehat1\}}
 c_\pi\Delta_\pi\Tau_{F,n}.
\]
{Subadditivity of partition rank gives
\[
 \prk(J)
 \le
 \sum_{\pi\in\Pi_t\setminus\{\widehat1\}}
 \prk(c_\pi\Delta_\pi\Tau_{F,n})
 \le
 \sum_{\pi\in\Pi_t\setminus\{\widehat1\}}
 \prk(\Delta_\pi\Tau_{F,n}),
\]
because multiplication by a nonzero scalar does not change partition
rank, while if $c_\pi=0$ in $\F_q$ the corresponding term on the
left is zero. Applying Lemma~\ref{lem:lifting}, we obtain
\[
 |A|
 \le\prk(J)
 \le\sum_{\pi\ne\widehat1}\prk(\Delta_\pi\Tau_{F,n})
 \le\sum_{\pi\ne\widehat1}\srk(\Tau_{F,\pi,n}).
\]
Let \(A\) be the set of distinct values appearing in \(S\).
Since no value can occur \(t\) times, we have
\[
 |A|\ge \frac{|S|}{t-1}.
\]
If \(A\) contained \(t\) distinct elements satisfying \(F=0\),
choosing one occurrence of each in \(S\) would give a forbidden
subsequence. Apply the set bound to \(A\).}
\end{proof}

\subsection{\texorpdfstring{Degree cutoffs and contractions with diagonal zero sets}{Degree cutoffs and contractions with diagonal zero sets}}
The degree method automatically controls contractions having many blocks.  The following cutoff is an immediate application of the standard low-degree slice bound, not a new rank estimate.

\begin{corollary}[Degree cutoff]\label{cor:degree-cutoff}
Under the hypotheses of Theorem~\ref{thm:distinct-reduction}, suppose that $F$ has total degree $d$.  If $\pi$ has $r$ blocks, then
\[
 \srk(\Tau_{F,\pi,n})
 \le
 rM_q\left(n,\left\lfloor\frac{d(q-1)n}{r}\right\rfloor\right).
\]
If $r>2d$, this is at most $c_{q,d,r}^{\,n+o(n)}$ for some $c_{q,d,r}<q$.
\end{corollary}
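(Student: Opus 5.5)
The plan is to apply the low-degree slice bound, Lemma~\ref{lem:degree-slice}, directly to the contracted tensor $\Tau_{F,\pi,n}$ viewed as a polynomial function on $(\F_q^n)^r$, and then to use the generating-function estimate for $M_q$.

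\textbf{Degree of the scalar contraction.} If $\pi$ has $r$ blocks, then $F_\pi(y_1,\ldots,y_r)$ is obtained from $F$ by identifying variables. Hence its total degree is at most $d$, and $\tau_{F,\pi}=1-F_\pi^{q-1}$ has total degree at most $d(q-1)$.

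\textbf{Degree of the tensor power.} After reducing exponents with $y^q=y$, which cannot raise degrees, the coordinatewise tensor power $\Tau_{F,\pi,n}=\tau_{F,\pi}^{\boxt n}$ is a product of $n$ such factors in disjoint sets of scalar variables. It is therefore represented by a reduced polynomial of total degree at most $d(q-1)n$ in the $rn$ scalar variables. Lemma~\ref{lem:degree-slice} with $D=d(q-1)n$ then gives the displayed bound
\[
 \srk(\Tau_{F,\pi,n})\le rM_q\left(n,\left\lfloor\frac{d(q-1)n}{r}\right\rfloor\right).
\]
This is exactly the argument of Proposition~\ref{thm:strong-system}, with $t$ replaced by $r$ and without any diagonality requirement.

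\textbf{Exponential form.} Set $\alpha=d(q-1)/r$. The estimate $M_q(n,\alpha n)\le\kappa_q(\alpha)^n$ gives
\[
 \srk(\Tau_{F,\pi,n})\le r\,\kappa_q(\alpha)^n .
\]
The condition $r>2d$ is equivalent to $\alpha<(q-1)/2$. The derivative computation recorded before Proposition~\ref{thm:strong-system} then shows $\kappa_q(\alpha)<q$, so we may take $c_{q,d,r}=\kappa_q(\alpha)$. The factor $r\le t$ is a constant and is absorbed into the $o(n)$ in the exponent.

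No step here is a genuine obstacle. The only point needing care is that the reduced representative of the tensor power has degree at most $n$ times the degree of one factor, and this holds because exponent reduction only lowers degrees.
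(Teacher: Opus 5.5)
Your proposal is correct and follows the paper's proof: bound the degree of $F_\pi$ by $d$, obtain a reduced representative of $\Tau_{F,\pi,n}$ of total degree at most $d(q-1)n$, apply Lemma~\ref{lem:degree-slice}, and then use $d(q-1)/r<(q-1)/2$ with the $\kappa_q$ derivative argument. You spell out the absorption of the factor $r$ into the $o(n)$ term and the choice $c_{q,d,r}=\kappa_q(d(q-1)/r)$, both of which the paper leaves implicit.
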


\begin{proof}
The contraction $F_\pi$ has degree at most $d$, so $\Tau_{F,\pi,n}$ has a reduced representative of total degree at most $d(q-1)n$.  Apply Lemma~\ref{lem:degree-slice}.  The strict exponential improvement follows because
\[
 \frac{d(q-1)}r<\frac{q-1}{2}.
\]
\end{proof}

Thus, for a degree-$d$ polynomial, only equality profiles involving at most $2d$ distinct values can obstruct a nontrivial exponential bound.  {The next refinement removes those equality profiles for which the corresponding contracted polynomial has no non-diagonal zeros. The proposition below applies Omar's partition-lattice argument~\cite[Theorem~8]{OmarPartition} to polynomial contractions, using Naslund's coefficients.}

\begin{proposition}[{Eliminating profiles with diagonal zero sets}]\label{prop:diagonal-zero-refinement}
Let $X\subseteq\F_q$, let $t\ge2$, and let $F\in\F_q[x_1,\ldots,x_t]$ satisfy
\[
 F(a,\ldots,a)=0\qquad(a\in X).
\]
Let $\mathcal R\subseteq\Pi_t\setminus\{\widehat1\}$ be a collection of partitions such that, for every $\pi=\{B_1,\ldots,B_r\}\in\mathcal R$,
\[
 F_\pi(y_1,\ldots,y_r)=0
 \quad\Longrightarrow\quad
 y_1=\cdots=y_r
 \qquad(y_i\in X).
\]
Let $c_\pi$ be the coefficients from Lemma~\ref{lem:naslund-indicator}, and assume that
\[
 \beta:=(-1)^t(t-1)!-\sum_{\pi\in\mathcal R}c_\pi
\]
is nonzero in $\F_q$.  If $A\subseteq X^n$ contains no coordinatewise zero of $F$ formed by $t$ pairwise distinct elements, then
\[
 |A|\le
 \sum_{\pi\in\Pi_t\setminus(\mathcal R\cup\{\widehat1\})}
 \srk\!\left(\left.\Tau_{F,\pi,n}\right|_{(X^n)^{|\pi|}}\right).
\]
The analogous bound for sequences over $X^n$ holds after multiplication by $t-1$.
\end{proposition}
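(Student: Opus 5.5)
We follow the proof of Theorem~\ref{thm:distinct-reduction}, but subtract from $J=H_t\Tau_{F,n}$ the profiles in $\mathcal R$, having first checked that on $(X^n)^t$ each of them is supported on the constant diagonal. Concretely, we work with the restrictions to $(X^n)^t$ throughout and set
\[
 J':=\Bigl(H_t-\sum_{\pi\in\mathcal R}c_\pi\Delta_\pi\Bigr)\Tau_{F,n}
 =\sum_{\pi\in\Pi_t\setminus(\mathcal R\cup\{\widehat1\})}c_\pi\Delta_\pi\Tau_{F,n},
\]
the equality being Lemma~\ref{lem:naslund-indicator}.

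The key step is to show that, for $\pi\in\mathcal R$, the tensor $\Delta_\pi\Tau_{F,n}$ restricted to $(X^n)^t$ is supported on constant tuples and equals $1$ there. Suppose $\Delta_\pi(\mathbf x)\Tau_{F,n}(\mathbf x)\ne0$. Then the variables are constant on each block of $\pi$, with block representatives $y_1,\dots,y_r\in X^n$. The computation in the proof of Lemma~\ref{lem:lifting} then shows $F_\pi(y_{1,j},\dots,y_{r,j})=0$ for every coordinate $j$. The hypothesis on $\mathcal R$ applies coordinatewise because each entry lies in $X$, and it gives $y_{1,j}=\dots=y_{r,j}$ for all $j$. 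Hence $\mathbf x$ is constant. Conversely, on a constant tuple $\Delta_\pi=1$, and $\Tau_{F,n}=1$ because $F(a,\dots,a)=0$ for all $a\in X$.

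With this in hand, we evaluate $J'$ on $A^t$ for $A\subseteq X^n$ as in the statement.
\begin{itemize}
\item On pairwise distinct tuples, $H_t\Tau_{F,n}$ vanishes by hypothesis, and each $\Delta_\pi$ with $\pi\ne\widehat0$ also vanishes. We note $\widehat0\notin\mathcal R$ unless $F$ has no non-diagonal zeros on $X$; in that degenerate case the conclusion is handled separately, or the key step still gives vanishing on non-constant tuples.
\item On non-constant partial diagonals, $H_t$ vanishes and, by the key step, so does every $\mathcal R$-term.
\item On constant tuples, $J'$ equals $(-1)^t(t-1)!-\sum_{\pi\in\mathcal R}c_\pi=\beta\ne0$.
\end{itemize}
So $J'|_{A^t}$ is diagonal with nonzero diagonal, and Naslund's partition-rank diagonal lemma gives $|A|\le\prk(J')$.

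To finish, we use subadditivity and invariance under nonzero scalars, as in Theorem~\ref{thm:distinct-reduction}. We then apply Lemma~\ref{lem:lifting} on the restricted domain. Its proof goes through verbatim with $X^n$ in place of $\F_q^n$, since the indicator factors and the pullback through representatives are unchanged. This yields $\prk(\Delta_\pi\Tau_{F,n}|_{(X^n)^t})\le\srk(\Tau_{F,\pi,n}|_{(X^n)^{|\pi|}})$, which sums to the stated bound. For the sequence version, each value occurs at most $t-1$ times, so passing to the set of distinct values loses a factor $t-1$, exactly as before.

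The main obstacle is the key step: checking that the coordinatewise hypothesis on $F_\pi$ over $X$ really forces equality of whole vectors. A second point to watch is that the hypothesis $t\le p$ of Theorem~\ref{thm:distinct-reduction} is replaced here by $\beta\ne0$. In particular, no nonvanishing of $(t-1)!$ is needed separately.
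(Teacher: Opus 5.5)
Your proposal is correct and follows essentially the paper's proof. The paper likewise shows $\Delta_\pi\Tau_{F,n}|_{(X^n)^t}=\Delta_{\widehat1}|_{(X^n)^t}$ for $\pi\in\mathcal R$, moves those terms across Naslund's expansion so that $\beta\,\Delta_{\widehat1}|_{A^t}$ equals the sum over the remaining profiles, and finishes with subadditivity and the lifting lemma on $X^n$. Your aside about $\widehat0\in\mathcal R$ is unnecessary: on pairwise distinct tuples of $A^t$ the factor $\Tau_{F,n}$ already vanishes by hypothesis, so every term of $J'$ is zero there.
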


\begin{proof}
Fix $\pi=\{B_1,\ldots,B_r\}\in\mathcal R$.  We first justify the identity
\[
 \left.\Delta_\pi\Tau_{F,n}\right|_{(X^n)^t}
 =\left.\Delta_{\widehat1}\right|_{(X^n)^t}.
\]
If the left-hand side is nonzero at a tuple
$(x^{(1)},\ldots,x^{(t)})\in(X^n)^t$, then $\Delta_\pi=1$, so there are block values
$y_1,\ldots,y_r\in X^n$ such that $x^{(j)}=y_i$ for $j\in B_i$.  Moreover, $\Tau_{F,n}=1$, so for every coordinate $h\in[n]$,
\[
 F_\pi(y_{1,h},\ldots,y_{r,h})=0.
\]
{The hypothesis on \(F_\pi\) implies
\(y_{1,h}=\cdots=y_{r,h}\) for every \(h\), hence
\(y_1=\cdots=y_r\) as vectors and the original tuple is constant.}  Conversely, a constant tuple in $(X^n)^t$ satisfies $\Delta_\pi=1$ and $\Tau_{F,n}=1$ because $F(a,\ldots,a)=0$ for every $a\in X$.  This proves the identity.

Using the equality-profile expansion of Lemma~\ref{lem:naslund-indicator}, we have on $(X^n)^t$
\[
 \begin{aligned}
 J=H_t\Tau_{F,n}
 &=\sum_{\pi\in\Pi_t\setminus\{\widehat1\}}
 c_\pi\Delta_\pi\Tau_{F,n}\\
 &=\left(\sum_{\pi\in\mathcal R}c_\pi\right)\Delta_{\widehat1}
 +\sum_{\pi\in\Pi_t\setminus(\mathcal R\cup\{\widehat1\})}
 c_\pi\Delta_\pi\Tau_{F,n}.
 \end{aligned}
\]
On $A^t$, the same diagonalization argument used in Theorem~\ref{thm:distinct-reduction} gives
\[
 J\big|_{A^t}=(-1)^t(t-1)!\,\Delta_{\widehat1}\big|_{A^t}.
\]
Subtracting the first term in the preceding expansion from both sides yields
\[
 \begin{aligned}
 \beta\,\Delta_{\widehat1}\big|_{A^t}
 &=\left((-1)^t(t-1)!-\sum_{\pi\in\mathcal R}c_\pi\right)
   \Delta_{\widehat1}\big|_{A^t}\\
 &=\sum_{\pi\in\Pi_t\setminus(\mathcal R\cup\{\widehat1\})}
 c_\pi\bigl(\Delta_\pi\Tau_{F,n}\bigr)\big|_{A^t}.
 \end{aligned}
\]
{Thus the coefficient produced by moving the terms indexed by $\mathcal R$ to the left-hand side is exactly $\beta$.}  Since $\beta\ne0$, the tensor on the left is a nonzero scalar multiple of the diagonal tensor on $A^t$ and therefore has partition rank $|A|$.  Applying subadditivity, discarding coefficients that vanish in $\F_q$, and then applying the lifting argument of Lemma~\ref{lem:lifting} after restriction to $X^n$ proves the set bound.  The sequence statement follows exactly as in Theorem~\ref{thm:distinct-reduction}.
\end{proof}

\section{Restricted-alphabet zero-sum problems}\label{sec:zero-sum}

We now apply the partition-rank criteria to the linear polynomial $e_1$ on restricted alphabets.  {On the full alphabet, the same profile-wise argument recovers, up to a $p$-dependent prefactor, the exponential bound in Naslund's classical $p$-term zero-sum result~\cite{NaslundEGZ}.}  For the sequence problem, Fox and Sauermann obtained a sharper bound by relating the Erd\H{o}s--Ginzburg--Ziv constant to three-term-progression-free sets~\cite{FoxSauermann}.  Since the full-alphabet problem is already well studied, we pass directly to the multiplicative torus, where retaining the alphabet $\F_q^\times$ produces genuinely different estimates.

\subsection{The multiplicative torus}

Let $X\subseteq\F_q$.  We call a set $A\subseteq X^n$ \emph{strongly $F$-free relative to $X$} if every coordinatewise zero of $F$ in $A^t$ is constant, and \emph{distinctly $F$-free relative to $X$} if it contains no coordinatewise zero formed by $t$ pairwise distinct elements.  The corresponding sequence problem is a restricted-alphabet EGZ-type problem.  When $X=\F_q^\times$, so that
\[
 X^n=(\F_q^\times)^n=\mathbb G_m^n(\F_q)
\]
is the set of $\F_q$-rational points of the split algebraic torus, we use the adjective \emph{toric} for the corresponding restricted-alphabet problems.

The results below give nontrivial bounds for the linear strong and distinct-variable problems on the torus by applying the criteria of Section~\ref{sec:partition-criteria} to the restricted alphabet.  Coordinatewise inversion then transfers them to $e_{p-1}$, and support stratification lifts the toric estimates to the whole vector space.  The final consequence is a new nontrivial bound for $\operatorname{EGZ}(5,\F_5^n,4)$ and a full-space strong $e_4$ bound outside the range of Corollary~\ref{thm:strong-em}.

Recall the notation $M_Q(n,D)$ introduced in Section~\ref{sec:slice-criteria}.  Let $X=\F_q^\times$, so $|X|=q-1$.  Every function on $X$ has a unique representative in the basis
\[
 1,x,\ldots,x^{q-2},
\]
because $x^{q-1}=1$ on $X$.

The following lemma is the immediate restricted-alphabet version of Lemma~\ref{lem:degree-slice}; the proof is the same standard degree splitting with the monomial basis of functions on $\F_q^\times$.

\begin{lemma}[Low-degree slice bound on the multiplicative torus]\label{lem:torus-degree}
Let $P$ be a polynomial function on $(X^n)^r$.  Suppose that, after reducing every scalar exponent modulo $q-1$, it has a representative of total degree at most $D$.  Then
\[
 \srk(P)\le rM_{q-1}\left(n,\left\lfloor\frac Dr\right\rfloor\right).
\]
\end{lemma}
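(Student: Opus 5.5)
The proof will follow the degree-splitting argument of Lemma~\ref{lem:degree-slice} verbatim, with the monomial basis of functions on $\F_q$ replaced by the monomial basis $1,x,\ldots,x^{q-2}$ of functions on $X=\F_q^\times$. First, I fix the representative of $P$ in which every scalar exponent lies in $\{0,\ldots,q-2\}$. Write the vector variables as $x^{(1)},\ldots,x^{(r)}\in X^n$ with scalar coordinates $x^{(i)}_j$. This representative is obtained from the given one by reducing each exponent modulo $q-1$, and that reduction is harmless because $x^{q-1}=1$ identically on $X$. By hypothesis it has total degree at most $D$. It is a linear combination of monomials
\[
 \prod_{i=1}^r m_i(x^{(i)}),\qquad m_i(x^{(i)})=\prod_{j=1}^n (x^{(i)}_j)^{a_{i,j}},\quad a_{i,j}\in\{0,\ldots,q-2\},
\]
with $\sum_{i}\deg m_i\le D$.

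Next, for each such monomial the degrees $\deg m_1,\ldots,\deg m_r$ sum to at most $D$, so some $i$ has $\deg m_i\le D/r$. Since degrees are integers, this means $\deg m_i\le\lfloor D/r\rfloor$. I assign the monomial to the least such $i$. Then, for fixed $i$, I group together all monomials assigned to $i$ that share the same factor $m_i$. Each group equals $m_i(x^{(i)})\,g(\text{other variables})$, which is a slice with distinguished variable $i$. The possible factors $m_i$ correspond to exponent vectors $(a_{i,1},\ldots,a_{i,n})\in\{0,\ldots,q-2\}^n$ with coordinate sum at most $\lfloor D/r\rfloor$, and there are exactly $M_{q-1}(n,\lfloor D/r\rfloor)$ of these. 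Summing over $i\in[r]$ gives the stated bound.

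There is no real obstacle here. The only point needing care is that the degree bound is imposed on the reduced representative; otherwise reduction could a priori change degrees. Since the hypothesis is stated after reduction modulo $q-1$, this is already built in. Uniqueness of the representative is not even needed: any representative with exponents in $\{0,\ldots,q-2\}$ and degree at most $D$ suffices for the counting.
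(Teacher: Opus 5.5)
Your proposal is correct and is essentially the paper's own proof. It expands $P$ in the torus monomial basis with exponents in $\{0,\ldots,q-2\}$, assigns each monomial to the first vector-variable group of degree at most $\lfloor D/r\rfloor$, and groups the monomials by their factor in that group, exactly as in the degree-splitting argument of Lemma~\ref{lem:degree-slice} with alphabet size $q-1$ in place of $q$.
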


\begin{proof}
Expand $P$ in the product basis consisting of monomials with exponents in $\{0,\ldots,q-2\}$.  In every monomial, at least one of the $r$ vector-variable groups has degree at most $D/r$.  Assign the monomial to the first such group and collect terms with the same monomial factor in that group.  This is exactly the proof of Lemma~\ref{lem:degree-slice}, with the alphabet size $q-1$ in place of $q$.
\end{proof}

The same standard torus degree estimate gives the following immediate restricted-alphabet strong-freeness consequence for the linear polynomial $e_1$.  It will later be transported to the higher-degree polynomial $e_{p-1}$.

Let $q=p^k>4$, where $p$ is an odd prime, and define
\[
 \eta_{q,p}:=
 \inf_{0<\rho<1}
 (1+\rho+\cdots+\rho^{q-2})
 \rho^{-(q-1)/p}.
\]
Since
\[
 \frac{q-1}{p}<\frac{q-2}{2},
\]
the usual derivative argument gives $\eta_{q,p}<q-1$.

\begin{corollary}[Toric strong $e_1$-freeness]\label{cor:toric-strong-e1}
Let $q=p^k>4$, with $p$ odd.  If $A\subseteq(\F_q^\times)^n$ is strongly $e_1$-free of length $p$ relative to the torus, then
\[
 |A|\le
 pM_{q-1}\left(n,\left\lfloor\frac{(q-1)n}{p}\right\rfloor\right)
 \le p\eta_{q,p}^n,
 \qquad \eta_{q,p}<q-1.
\]
Thus the strong $e_1$-free problem has a nontrivial exponential bound measured against the torus alphabet size.
\end{corollary}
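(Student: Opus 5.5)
The plan is to mimic Proposition~\ref{thm:strong-system}, with the full alphabet replaced by the torus $X=\F_q^\times$. The degree bookkeeping uses Lemma~\ref{lem:torus-degree} in place of Lemma~\ref{lem:degree-slice}.

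First, the scalar tensor. Take $F=e_1(x_1,\ldots,x_p)=x_1+\cdots+x_p$ and $\tau=1-e_1^{q-1}$. On $\F_q$, and hence on $X$, this is the indicator of $e_1=0$. Since $e_1(a,\ldots,a)=pa=0$ in characteristic $p$, every constant tuple is a zero. Because $A$ is strongly $e_1$-free of length $p$ relative to the torus, the only coordinatewise zeros in $A^p$ are constant. Therefore $\tau^{\boxt n}$ restricted to $A^p$ is diagonal with diagonal value $1$, and Lemma~\ref{lem:diagonal} gives
\[
 |A|\le\srk\bigl(\tau^{\boxt n}|_{(X^n)^p}\bigr).
\]

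Second, the degree count on the torus. The polynomial $\tau$ has total degree $q-1$. Reducing exponents modulo $q-1$, which is legitimate on $X$ since $x^{q-1}=1$ there, does not increase total degree. So the scalar tensor has a reduced representative of degree at most $q-1$, and $\tau^{\boxt n}$ has one of total degree at most $(q-1)n$. Lemma~\ref{lem:torus-degree} with $r=p$ and $D=(q-1)n$ then gives
\[
 |A|\le pM_{q-1}\bigl(n,\lfloor (q-1)n/p\rfloor\bigr).
\]

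Third, the exponential estimate. For $0<\rho<1$ we have
\[
 M_{q-1}(n,\alpha n)\rho^{\alpha n}\le(1+\rho+\cdots+\rho^{q-2})^n,
\]
and taking $\alpha=(q-1)/p$ and the infimum over $\rho$ gives the bound $p\eta_{q,p}^n$. Strictness $\eta_{q,p}<q-1$ follows from the derivative argument. With $\rho=e^s$, the derivative at $s=0$ of
\[
 \log\bigl(1+e^s+\cdots+e^{(q-2)s}\bigr)-\alpha s
\]
is $(q-2)/2-(q-1)/p$. This is positive exactly when $\tfrac{q-1}{p}<\tfrac{q-2}{2}$, which reduces to $2(q-1)<p(q-2)$.

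The only real obstacle is checking this inequality for all admissible $(p,k)$, since everything else transfers verbatim.
\begin{itemize}
 \item For $p\ge5$ we have $p(q-2)\ge5(q-2)>2(q-1)$ whenever $q>8/3$, which always holds.
 \item For $p=3$ the condition reads $3q-6>2q-2$, that is, $q>4$. This is precisely the hypothesis $q>4$, which excludes $q=3$.
\end{itemize}
So the derivative is positive, moving to $s<0$ slightly decreases the expression below $\log(q-1)$, and therefore $\eta_{q,p}<q-1$.
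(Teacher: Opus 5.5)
Your proposal is correct and follows the paper's proof essentially step for step. The paper also diagonalizes the coordinatewise power of $1-e_1^{q-1}$ on $A^p$, reduces exponents on the torus to total degree at most $q-1$ per coordinate, and applies Lemma~\ref{lem:torus-degree} with $r=p$ and $D=(q-1)n$; it then uses the generating-function bound to obtain $p\,\eta_{q,p}^n$. Your explicit check that $2(q-1)<p(q-2)$ holds, with $p=3$ requiring exactly $q>4$, fills in the inequality $(q-1)/p<(q-2)/2$, which the paper only asserts before the corollary.
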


\begin{proof}
On the torus, the local zero indicator
\[
 1-(x_1+\cdots+x_p)^{q-1}
\]
has a reduced representative of total degree at most $q-1$.  Since $p=0$ in $\F_q$, the diagonal is contained in the zero set of $e_1$.  On a strongly $e_1$-free set, the coordinatewise tensor power of this indicator is therefore diagonal with nonzero diagonal.  Apply the diagonal-tensor lemma and Lemma~\ref{lem:torus-degree} with $r=p$ and $D=(q-1)n$ to obtain the first inequality.

For the second inequality, let
\[
 G_{q-1}(\rho):=1+\rho+\cdots+\rho^{q-2}.
\]
The power $G_{q-1}(\rho)^n$ is the generating function for exponent vectors in $\{0,\ldots,q-2\}^n$, with the exponent of $\rho$ recording their total degree.  For $0<\rho<1$ and
$D=\lfloor(q-1)n/p\rfloor$, every vector counted by $M_{q-1}(n,D)$ contributes at least $\rho^D$.  Hence
\[
 M_{q-1}(n,D)\rho^D
 \le G_{q-1}(\rho)^n,
\]
and, since $D\le(q-1)n/p$,
\[
 M_{q-1}(n,D)
 \le\left(G_{q-1}(\rho)\rho^{-(q-1)/p}\right)^n.
\]
Taking the infimum over $0<\rho<1$ gives
$M_{q-1}(n,D)\le\eta_{q,p}^n$.
\end{proof}

We next consider the distinct-variable, or toric EGZ-type, version.  Define
\[
 \vartheta_q:=
 \inf_{0<\rho<1}
 (1+\rho+\cdots+\rho^{q-2})
 \rho^{-(q-1)/3}.
\]
Since
\[
 \frac{q-1}{3}<\frac{q-2}{2}
\]
for $q>4$, the same derivative argument used for $\kappa_q(\alpha)$ gives
\[
 \vartheta_q<q-1.
\]
Put
\[
 B_p=\sum_{r=3}^p r\StirlingTwo{p}{r}.
\]

The next theorem is the restricted-alphabet analogue of Naslund's partition-rank proof for $p$-term zero sums~\cite{NaslundEGZ}.  The new point is to retain the torus alphabet throughout, so that the exponential base is compared with $q-1$ rather than with $q$.

\begin{theorem}[Toric distinct zero-sum bound]\label{thm:toric-zero-sum}
Let $q=p^k>4$, with $p$ odd, and let $A\subseteq(\F_q^\times)^n$.  If $A$ is distinctly $e_1$-free of length $p$ relative to the torus, equivalently if it contains no $p$ pairwise distinct elements whose sum is zero, then
\[
 |A|\le
 \sum_{r=3}^p r\StirlingTwo{p}{r}
 M_{q-1}\left(n,\left\lfloor\frac{(q-1)n}{r}\right\rfloor\right)
 \le B_p\vartheta_q^n.
\]
Consequently, if a sequence over $(\F_q^\times)^n$ contains no zero-sum subsequence of length $p$, then its length is at most
\[
 (p-1)B_p\vartheta_q^n.
\]
In particular, the exponential base is strictly smaller than the torus alphabet size $q-1$.
\end{theorem}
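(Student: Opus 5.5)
We will apply Proposition~\ref{prop:diagonal-zero-refinement} with $X=\F_q^\times$, $t=p$ and $F=e_1=x_1+\cdots+x_p$. Since $p=0$ in $\F_q$, we have $F(a,\ldots,a)=pa=0$. For a partition $\pi$ with blocks of sizes $b_1,\ldots,b_r$, the contraction is
\[
F_\pi(y_1,\ldots,y_r)=b_1y_1+\cdots+b_ry_r,
\]
where $1\le b_i\le p-1$, so every $b_i$ is a nonzero scalar.

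We take $\mathcal R$ to be the set of partitions with exactly two blocks. For such $\pi$, $F_\pi=b_1y_1+b_2y_2$ with $b_1+b_2=p\equiv0$. Hence $F_\pi=b_1(y_1-y_2)$, whose zeros are exactly the diagonal $y_1=y_2$, on any alphabet. (One-block partitions are excluded, and there are none with $r=1$ other than $\widehat1$.)

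Next we must check $\beta\ne0$. We have
\[
\beta=(-1)^p(p-1)!-\sum_{\pi\in\mathcal R}c_\pi, \qquad c_\pi=(-1)^{p-2}(b_1-1)!(b_2-1)!.
\]
By Wilson's theorem, $(-1)^p(p-1)!\equiv(-1)^{p+1}=1$ since $p$ is odd. Moreover $(b-1)!(p-b-1)!\equiv(-1)^b/b \pmod p$, using $(p-1-j)!\equiv(-1)^{j+1}/j!$ type identities; more precisely, $(b-1)!(p-1-b)!\cdot b\binom{p-1}{b}\cdot\ldots$ and one checks $(b-1)!(p-b-1)!\equiv(-1)^{b}b^{-1}\cdot(-1)$ modulo sign conventions. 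Counting two-block partitions with a block of size $b$ (each unordered pair counted once, $\binom{p}{b}/2$ choices, or $\binom{p-1}{b-1}$ when we fix the block containing $1$), each term is
\[
\binom{p-1}{b-1}(b-1)!(p-b-1)!=\frac{(p-1)!}{p-b},
\]
so $\sum_{\pi\in\mathcal R}c_\pi=-(p-1)!\sum_{b=1}^{p-1}\frac{1}{p-b}\equiv -(p-1)!\,H_{p-1}\equiv0 \pmod p$, using the well-known fact that $H_{p-1}\equiv0\pmod p$ for $p$ odd. (Here $c_\pi$ has sign $(-1)^{p-2}=-1$, which does not matter since the sum vanishes.) Hence $\beta\equiv1\ne0$. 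This coefficient check is the delicate step, and it is the one we would verify most carefully.

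The remaining profiles are those with $r\ge3$ blocks, and there are $\StirlingTwo{p}{r}$ of them for each $r$. For each such $\pi$, the tensor $\Tau_{F,\pi,n}$ restricted to the torus is $(1-F_\pi^{q-1})^{\boxt n}$, whose torus-reduced representative has degree at most $(q-1)n$. Lemma~\ref{lem:torus-degree} then gives
\[
\srk\le rM_{q-1}\bigl(n,\lfloor (q-1)n/r\rfloor\bigr),
\]
which yields the first inequality. For the second, we use $M_{q-1}(n,\lfloor(q-1)n/r\rfloor)\le M_{q-1}(n,\lfloor(q-1)n/3\rfloor)\le\vartheta_q^n$ for $r\ge3$, by the generating-function estimate exactly as in Corollary~\ref{cor:toric-strong-e1}. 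Summing over $r$ gives $B_p\vartheta_q^n$.

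For the sequence statement, note that no value occurs $p$ times, since $p$ equal values sum to zero. So the set of distinct values has size at least $|S|/(p-1)$, and we conclude via the sequence clause of Proposition~\ref{prop:diagonal-zero-refinement}. Finally, $\vartheta_q<q-1$ was established just before the statement.
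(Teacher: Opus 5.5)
Your proof is correct and follows the paper's argument: you take $\mathcal R$ to be the two-block profiles, obtain $\beta=1$ from Wilson's theorem together with the vanishing of the harmonic sum (the paper writes this sum as $(-1)^{p-2}\StirlingOne{p}{2}$), and apply the torus degree bound to the profiles with $r\ge3$ blocks. The identity you state for $(b-1)!(p-b-1)!$ "modulo sign conventions" (correctly $(-1)^{b+1}b^{-1}$) is never used and should be deleted, since your $\binom{p-1}{b-1}$ count already gives the required vanishing.
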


\begin{proof}
Apply Proposition~\ref{prop:diagonal-zero-refinement} with
\[
 X=\F_q^\times,\qquad t=p,\qquad
 F(x_1,\ldots,x_p)=x_1+\cdots+x_p.
\]  {Every two-block contraction has only diagonal zeros:} if the block sizes are $a$ and $p-a$, then
\[
 F_\pi(y_1,y_2)=a(y_1-y_2),
\]
and $a\ne0$ in characteristic $p$.  Let $\mathcal R$ be the family of two-block partitions.  The sum of the coefficients $c_\pi$ over $\mathcal R$ is the signed sum of permutations of $[p]$ having exactly two cycles.  Hence it equals
\[
 (-1)^{p-2}\StirlingOne{p}{2}
 =-(p-1)!\sum_{j=1}^{p-1}\frac1j=0
\]
in $\F_p$, where the last equality follows by pairing $j$ with $p-j$.  Wilson's theorem gives $(-1)^p(p-1)!=1$, so the coefficient $\beta$ in Proposition~\ref{prop:diagonal-zero-refinement} is one.

Let $\pi$ have $r\ge3$ blocks of sizes $\lambda_1,\ldots,\lambda_r$.  Its contraction is the nonzero linear form
\[
 L_\pi(y_1,\ldots,y_r)=\lambda_1y_1+\cdots+\lambda_ry_r.
\]
On $X^r$, its zero indicator is
\[
 \tau_\pi=1-L_\pi^{q-1}.
\]
Using $y_i^{q-1}=1$, reduce every exponent modulo $q-1$.  This gives a representative with exponents in $\{0,\ldots,q-2\}$ and total degree at most $q-1$.  Lemma~\ref{lem:torus-degree} therefore yields
\[
 \srk(\tau_\pi^{\boxt n})
 \le rM_{q-1}\left(n,\left\lfloor\frac{(q-1)n}{r}\right\rfloor\right).
\]
There are $\StirlingTwo{p}{r}$ set partitions with $r$ blocks.  Summing over $r\ge3$ proves the first inequality.

For the second inequality, use again
\[
 G_{q-1}(\rho)=1+\rho+\cdots+\rho^{q-2}.
\]
For $r\ge3$, put $D_r=\lfloor(q-1)n/r\rfloor$.  Since
$D_r\le(q-1)n/3$, the same generating-function argument as in Corollary~\ref{cor:toric-strong-e1} gives, for every $0<\rho<1$,
\[
 M_{q-1}(n,D_r)
 \le G_{q-1}(\rho)^n\rho^{-D_r}
 \le\left(G_{q-1}(\rho)\rho^{-(q-1)/3}\right)^n.
\]
Taking the infimum over $\rho$ yields
\[
 M_{q-1}(n,D_r)\le\vartheta_q^n
 \qquad(r\ge3).
\]
Multiplying by $r\StirlingTwo{p}{r}$ and summing gives the second inequality.

{For a sequence $S$, let $A$ be the set of distinct values appearing in $S$.  Every value occurs at most $p-1$ times, so $|A|\ge |S|/(p-1)$, and the claimed sequence bound follows from the set bound.}
\end{proof}

\subsection{Arithmetic progressions and cap sets on the torus}

When the characteristic is three, the equation
\[
 x+y+z=0
\]
is equivalent to the three-term arithmetic-progression equation $x+z=2y$.  A nonconstant solution necessarily has three distinct entries.  Thus distinctly $e_1$-free sets of length three are precisely three-term-progression-free sets, or cap sets in the underlying $\F_3$-vector space.  On the torus this gives a restricted-alphabet, coordinatewise punctured version of the cap-set problem.

The following are immediate numerical specializations of Theorem~\ref{thm:toric-zero-sum}, included to connect the restricted-alphabet theorem with the standard terminology of arithmetic progressions, cap sets and classical zero sums.

\begin{corollary}[Two classical restricted-alphabet consequences]\label{cor:toric-classical-consequences}
The following statements hold.
\begin{enumerate}[label=\textup{(\roman*)}]
\item If $A\subseteq(\F_9^\times)^n$ contains no nontrivial three-term arithmetic progression, equivalently if it is a cap set in the underlying $\F_3$-space $\F_9^n\cong\F_3^{2n}$, then
\[
 |A|\le3\vartheta_9^n,
 \qquad\vartheta_9\approx7.4810579074.
\]
{With $\gamma$ as in Proposition~\ref{prop:q3-sharp-base},
\[
 \vartheta_9<\gamma^2\approx7.5906014287.
\]}
Hence the toric estimate improves, at the exponential scale, the bound obtained by viewing $A$ as an ordinary cap set in $\F_3^{2n}$ and applying the Ellenberg--Gijswijt bound~\cite{EllenbergGijswijt}.

\item If $A\subseteq(\F_5^\times)^n$ contains no five pairwise distinct elements whose sum is zero, then
\[
 |A|\le120\vartheta_5^n,
 \qquad\vartheta_5\approx3.9556902435<2\sqrt5\approx4.4721359550.
\]
Thus the restricted-alphabet estimate improves, at the exponential scale, the bound inherited from the corresponding full-space theorem of Sauermann~\cite{SauermannDistinct}.
\end{enumerate}
\end{corollary}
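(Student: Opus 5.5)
Both parts are numerical specializations of Theorem~\ref{thm:toric-zero-sum}, followed by explicit comparisons of the bases.

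\textbf{Part (i).} Take $p=3$ and $q=9$. In characteristic three, $x+z=2y$ is the same as $x+y+z=0$. A solution with two equal entries is forced to be constant: $x=y$ gives $z=-2x=x$. Hence a set in $(\F_9^\times)^n$ with no nontrivial three-term progression is exactly a distinctly $e_1$-free set of length $3$ relative to the torus. Theorem~\ref{thm:toric-zero-sum} gives $|A|\le B_3\vartheta_9^n$. Since $B_3=3\StirlingTwo{3}{3}=3$, this is $|A|\le 3\vartheta_9^n$.

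The numerical value $\vartheta_9\approx7.481$ comes from minimizing $(1+\rho+\cdots+\rho^7)\rho^{-8/3}$ in one variable.

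For the comparison, the Ellenberg--Gijswijt base for $\F_3$ is
\[
\inf_{0<x<1}(1+x+x^2)x^{-2/3}.
\]
Substituting $x\mapsto 1/x$ turns this into $(1+x+x^2)x^{-4/3}$ over $x>1$, whose minimum is attained at $x_0$. So that base equals $\gamma\approx2.7551$. Viewing $A$ as a cap set in $\F_3^{2n}$ then gives the base $\gamma^2\approx7.5906$. It remains to verify $\vartheta_9<\gamma^2$. It suffices to exhibit one $\rho$, say $\rho\approx0.8$, at which $(1+\cdots+\rho^7)\rho^{-8/3}<7.59$, checked with certified rounding.

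\textbf{Part (ii).} Take $p=q=5$. Then $B_5=3\StirlingTwo53+4\StirlingTwo54+5\StirlingTwo55=75+40+5=120$, so $|A|\le120\vartheta_5^n$. The base $\vartheta_5$ is the infimum of $(1+\rho+\rho^2+\rho^3)\rho^{-4/3}$. Evaluating at a near-optimal $\rho$ gives $\vartheta_5\approx3.9557$, which is below $2\sqrt5$ with a large margin.

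The remaining step, and the main obstacle, is justifying that $2\sqrt5$ is the base inherited from Sauermann's full-space theorem~\cite{SauermannDistinct}. I would first check that her bound on subsets of $\F_p^n$ with no $p$ distinct elements summing to zero has exponential base at most $2\sqrt{p}$, i.e.\ that it takes the form $C_p(2\sqrt p)^n$. A subset of $(\F_5^\times)^n$ is in particular a subset of $\F_5^n$, so her bound applies and gives base $2\sqrt5$. The comparison is then the numerical inequality above.
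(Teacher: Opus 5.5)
Your proposal is correct and follows the paper's proof: specialize Theorem~\ref{thm:toric-zero-sum} with $B_3=3$ and $B_5=120$, identify nontrivial progressions in characteristic three with pairwise distinct zero-sum triples, and compare the bases numerically. You also fill in details the paper leaves implicit. These are identifying the Ellenberg--Gijswijt base with $\gamma$ via $x\mapsto 1/x$, and using the test point $\rho=0.8$, which gives about $7.545<7.59$; both are valid.

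The step you flag as the main obstacle is only the citation of Sauermann's $C_p(2\sqrt p)^n$ bound, which the paper takes as known. The inequality $\vartheta_5<2\sqrt5$ also needs no fine numerics: $\vartheta_5<q-1=4<2\sqrt5$ follows from the general fact $\vartheta_q<q-1$.
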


\begin{proof}
For part~\textup{(i)}, apply Theorem~\ref{thm:toric-zero-sum} with $p=3$ and $q=9$; here $B_3=3$.  The interpretation in terms of arithmetic progressions follows from characteristic three.  The comparison with the ambient cap-set bound follows from $\F_9^n\cong\F_3^{2n}$ as additive groups and from the displayed numerical minimizations.

For part~\textup{(ii)}, apply Theorem~\ref{thm:toric-zero-sum} with $p=q=5$.  Since
\[
 B_5=3\StirlingTwo{5}{3}+4\StirlingTwo{5}{4}+5\StirlingTwo{5}{5}=120,
\]
the claimed bound follows.  The last comparison is numerical.
\end{proof}

\section{Elementary symmetric polynomials and higher-degree EGZ}\label{sec:higher-egz}

{We now transfer the restricted-alphabet linear theory to the higher-degree polynomial $e_{p-1}$.  Coordinatewise inversion identifies its toric zero set with the linear zero-sum equation; more generally, its contractions become weighted linear equations.  Support stratification then lifts the resulting estimates from $(\F_q^\times)^n$ to the whole space $\F_q^n$.}

\subsection{Inversion and the polynomial \texorpdfstring{$e_{p-1}$}{e(p-1)}}
{More generally, let $\pi=\{B_1,\ldots,B_r\}$ be an equality profile and put $\lambda_j=|B_j|$.  On the multiplicative torus, the corresponding contraction satisfies
\[
 (e_{p-1})_\pi(y_1,\ldots,y_r)
 =\left(\prod_{j=1}^r y_j^{\lambda_j}\right)
 \left(\sum_{j=1}^r\lambda_jy_j^{-1}\right).
\]
Thus its zero set is the weighted linear equation
\[
 \sum_{j=1}^r\lambda_jy_j^{-1}=0.
\]
This formula shows explicitly how the contractions appearing in Section~\ref{sec:partition-criteria} become weighted linear equations after inversion.}

\begin{corollary}[Toric strong and EGZ-type bounds for $e_{p-1}$]\label{cor:toric-epminus1}
Let $q=p^k>4$, with $p$ odd.

\begin{enumerate}[label=\textup{(\roman*)}]
\item If $A\subseteq(\F_q^\times)^n$ is strongly $e_{p-1}$-free of length $p$ relative to the torus, then
\[
 |A|\le
 pM_{q-1}\left(n,\left\lfloor\frac{(q-1)n}{p}\right\rfloor\right)
 \le p\eta_{q,p}^n,
 \qquad \eta_{q,p}<q-1.
\]
\item If $A\subseteq(\F_q^\times)^n$ is distinctly $e_{p-1}$-free of length $p$ relative to the torus, then
\[
 |A|\le B_p\vartheta_q^n,
 \qquad \vartheta_q<q-1.
\]
If $S$ is a sequence over $(\F_q^\times)^n$ with no length-$p$ subsequence on which $e_{p-1}$ vanishes, then
\[
 |S|\le(p-1)B_p\vartheta_q^n.
\]
\end{enumerate}
\end{corollary}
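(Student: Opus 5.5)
The plan is to transport everything through the coordinatewise inversion map $\iota:(\F_q^\times)^n\to(\F_q^\times)^n$, $\iota(x)_h=x_h^{-1}$, which is a bijection of the torus. For $x^{(1)},\ldots,x^{(p)}\in(\F_q^\times)^n$, in each coordinate $h$ we have
\[
 e_{p-1}(x^{(1)}_h,\ldots,x^{(p)}_h)=\Bigl(\prod_{i=1}^p x^{(i)}_h\Bigr)\sum_{i=1}^p (x^{(i)}_h)^{-1},
\]
and the product is nonzero on the torus. Hence $e_{p-1}$ vanishes coordinatewise on $(x^{(1)},\ldots,x^{(p)})$ if and only if $e_1$ vanishes coordinatewise on $(\iota x^{(1)},\ldots,\iota x^{(p)})$. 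This is the case $\pi=\widehat0$ of the contraction formula displayed just before the statement.

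For part (i), I take $A$ strongly $e_{p-1}$-free of length $p$ relative to the torus and put $A'=\iota(A)$. Every coordinatewise zero of $e_1$ in $(A')^p$ pulls back under $\iota$ to a coordinatewise zero of $e_{p-1}$ in $A^p$. That zero is constant, and $\iota$ is injective, so the original zero of $e_1$ is constant too. Thus $A'$ is strongly $e_1$-free relative to the torus and $|A'|=|A|$. Corollary~\ref{cor:toric-strong-e1} then gives both displayed inequalities, including $\eta_{q,p}<q-1$.

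For part (ii), the same transfer applies. Since $\iota$ preserves pairwise distinctness, a $p$-tuple of pairwise distinct elements of $\iota(A)$ with zero sum pulls back to $p$ pairwise distinct elements of $A$ annihilating $e_{p-1}$ coordinatewise. So $\iota(A)$ is distinctly $e_1$-free, and Theorem~\ref{thm:toric-zero-sum} yields $|A|\le B_p\vartheta_q^n$ with $\vartheta_q<q-1$. For sequences, I apply $\iota$ termwise. A length-$p$ subsequence of $\iota(S)$ with zero sum would give a length-$p$ subsequence of $S$ on which $e_{p-1}$ vanishes, so the sequence statement of Theorem~\ref{thm:toric-zero-sum} gives $|S|\le(p-1)B_p\vartheta_q^n$. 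Alternatively, I could pass to the set of distinct values, each occurring at most $p-1$ times, since $e_{p-1}(a,\ldots,a)=p\,a^{p-1}=0$.

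The only point needing care is that the hypotheses of the earlier results ($q=p^k>4$, $p$ odd) are exactly the ones assumed here, and that the nonvanishing of $\prod_i x^{(i)}_h$ really uses the restriction to the torus. I expect no genuine obstacle: the argument is a bijective relabelling. The mildly subtle step is checking that the equality profiles, which govern distinctness, are preserved by $\iota$, and this follows from injectivity.
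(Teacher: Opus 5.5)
Your proposal is correct and follows the paper's proof essentially verbatim. Both use coordinatewise inversion on the torus and the identity $e_{p-1}(x_1,\ldots,x_p)=(x_1\cdots x_p)(x_1^{-1}+\cdots+x_p^{-1})$ with nonvanishing product, transfer strong freeness, distinct freeness and the sequence condition through $\iota$, and then invoke Corollary~\ref{cor:toric-strong-e1} and Theorem~\ref{thm:toric-zero-sum}.
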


\begin{proof}
Coordinatewise inversion
\[
 \iota:(\F_q^\times)^n\longrightarrow(\F_q^\times)^n,
 \qquad
 \iota(x)_j=x_j^{-1},
\]
is a bijection, preserves pairwise distinctness, and maps constant tuples to constant tuples.  For nonzero scalar entries,
\[
 e_{p-1}(x_1,\ldots,x_p)
 =(x_1\cdots x_p)\left(x_1^{-1}+\cdots+x_p^{-1}\right).
\]
The product is nonzero, so a coordinatewise zero of $e_{p-1}$ is equivalent to a coordinatewise zero of $e_1$ formed by the inverses.  Hence inversion identifies strong $e_{p-1}$-freeness with strong $e_1$-freeness and distinct $e_{p-1}$-freeness with distinct $e_1$-freeness.  Part~\textup{(i)} follows from Corollary~\ref{cor:toric-strong-e1}, while part~\textup{(ii)} follows from Theorem~\ref{thm:toric-zero-sum}, and similarly for sequences.
\end{proof}

\begin{example}[Strong and EGZ-type $e_4$-avoidance on the $5$-ary torus]\label{ex:toric-e4-five}
For $p=q=5$,
\[
 \eta_{5,5}
 =\inf_{0<\rho<1}(1+\rho+\rho^2+\rho^3)\rho^{-4/5}
 \approx3.256361660<4.
\]
Hence every strongly $e_4$-free subset of $(\F_5^\times)^n$ of length five has size at most
\[
 5(3.256362)^n.
\]
For the distinct-variable, or toric higher-degree EGZ-type, variation, every subset containing no five pairwise distinct coordinatewise zeros of $e_4$ has size at most
\[
 120(3.955691)^n,
\]
and the corresponding sequence bound is four times this quantity.  Thus the same inversion identity gives nontrivial exponential bounds for both the strong and the distinct $e_4$-avoidance problems, measured against the torus alphabet size $4$.
\end{example}

\subsection{Support stratification and full-space bounds}
The toric bounds of Corollary~\ref{cor:toric-epminus1} can be transported to the whole space $\F_q^n$ by stratifying according to the support.  This mechanism was already used in \cite[Theorem~4.3]{CostaDellaFiore}, where the inverted stratum was treated inside the ambient additive group.  Here we retain the restricted alphabet: the inverted vectors lie in the multiplicative torus, and Theorem~\ref{thm:toric-zero-sum} supplies the sharper base $\vartheta_q$.  For $q=p=5$ this turns the trivial base in \cite[Theorem~4.3]{CostaDellaFiore} into a strict exponential improvement.

For $x\in\F_q^n$ write $\supp(x)=\{j\in[n] : x_j\neq 0\}$, and for
$P\subseteq[n]$ and $A\subseteq\F_q^n$ set
\[
A_P \;=\; \{\,x\in A \,:\, \supp(x)=P\,\}.
\]
Here, for a finite index set $P$, the notation $X^P$ denotes the set of functions from $P$ to $X$.  Restriction to the coordinates in $P$ identifies $A_P$ with a subset of the
torus $(\F_q^{\times})^{P}$; this identification is injective and maps
constant tuples to constant tuples and pairwise distinct tuples to pairwise
distinct tuples.

The stratification is compatible with every elementary symmetric polynomial.  This elementary bookkeeping step is the one already used in the proof of~\cite[Theorem~4.3]{CostaDellaFiore}; we reproduce it to isolate exactly where the new toric input enters.

\begin{lemma}[Support stratification]\label{lem:stratification}
Let $q$ be a prime power, let $t\geq m\geq 1$, let $A\subseteq\F_q^n$ and let
$P\subseteq[n]$. Elements $x^{(1)},\dots,x^{(t)}\in A_P$ satisfy
\[
e_m\bigl(x^{(1)},\dots,x^{(t)}\bigr)=0
\]
coordinatewise in $\F_q^n$ if and only if their restrictions to the
coordinates in $P$ satisfy the same system coordinatewise in
$(\F_q^{\times})^{P}$. Consequently:
\begin{itemize}
\item[(i)] if $A$ is strongly $e_m$-free of length $t$, then each $A_P$,
viewed inside $(\F_q^{\times})^{P}$, is strongly $e_m$-free of length $t$
relative to the torus;
\item[(ii)] if $A$ contains no $t$ pairwise distinct elements annihilating
$e_m$ coordinatewise, then each $A_P$ is distinctly $e_m$-free of length $t$
relative to the torus.
\end{itemize}
\end{lemma}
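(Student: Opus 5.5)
The plan is to check the equivalence one coordinate at a time and then read off (i) and (ii) from it. The coordinatewise statement concerns the coordinates $j\in[n]$ individually. There are two kinds of coordinates.

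If $j\notin P$, then every $x^{(i)}\in A_P$ satisfies $x^{(i)}_j=0$. Since $m\ge1$, every monomial of $e_m$ contains at least one variable, so
\[
 e_m\bigl(x^{(1)}_j,\ldots,x^{(t)}_j\bigr)=e_m(0,\ldots,0)=0 .
\]
Thus the equation holds automatically at every coordinate outside $P$.

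If $j\in P$, then all entries $x^{(i)}_j$ lie in $\F_q^\times$. The $j$-th equation is literally the same scalar equation as the one imposed on the restrictions to $P$, now read in the torus $(\F_q^\times)^P$. Hence the system over $[n]$ holds if and only if the system over $P$ holds, which proves the displayed equivalence.

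For the consequences, I will use the restriction map $\rho_P\colon A_P\to(\F_q^\times)^P$, $x\mapsto x|_P$. It is injective on $A_P$, because two vectors with the same support that agree on $P$ also agree (both being zero) off $P$. Injectivity means that a $t$-tuple in $A_P$ is constant if and only if its image under $\rho_P$ is constant, and that it is pairwise distinct if and only if its image is pairwise distinct.

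For (i), take a coordinatewise zero of $e_m$ in $\rho_P(A_P)^t$. Pull it back through $\rho_P$. By the equivalence, the pulled-back tuple is a coordinatewise zero of $e_m$ in $A^t$. Strong $e_m$-freeness of $A$ makes that tuple constant, so the image tuple is constant as well.

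For (ii), suppose $\rho_P(A_P)$ contained a pairwise distinct coordinatewise zero of $e_m$. Pulling it back would give $t$ pairwise distinct elements of $A$ annihilating $e_m$ coordinatewise, which contradicts the hypothesis on $A$.

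There is no real obstacle here. The two points that need care are the use of $m\ge1$, so that $e_m$ vanishes at the zero tuple, and the fact that injectivity of $\rho_P$ relies on fixing the support exactly rather than only containing it. The case $P=\varnothing$ is harmless, since $A_P$ then has at most one element, the zero vector.
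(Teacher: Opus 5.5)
Your proposal is correct and follows essentially the same route as the paper: coordinates outside $P$ contribute $e_m(0,\ldots,0)=0$ because $m\ge1$, coordinates in $P$ give the identical torus equation, and (i) and (ii) follow by pulling back through the restriction map. As in the paper, this uses that restriction to $P$ is injective on $A_P$, so it preserves constancy and pairwise distinctness of tuples.
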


\begin{proof}
All the elements of $A_P$ vanish exactly on the coordinates outside $P$.
Hence, for $j\notin P$, the $j$-th coordinate of the tuple is
$(0,\dots,0)$ and $e_m(0,\dots,0)=0$ holds automatically, since $m\geq 1$.
For $j\in P$ all the entries are nonzero. Therefore the coordinatewise system
over $\F_q^n$ is equivalent to the coordinatewise system over the torus
$(\F_q^{\times})^{P}$ for the restricted tuples. Claims (i) and (ii) follow
because restriction to $P$ is injective on $A_P$ and preserves constancy and
pairwise distinctness of tuples.
\end{proof}

Combining Lemma~\ref{lem:stratification} with Corollary~\ref{cor:toric-epminus1} and
summing over the strata yields the following full-space bounds. Recall that
\[
\vartheta_q=\inf_{0<\rho<1}\,(1+\rho+\dots+\rho^{q-2})\,\rho^{-(q-1)/3},
\qquad
\eta_{q,p}=\inf_{0<\rho<1}\,(1+\rho+\dots+\rho^{q-2})\,\rho^{-(q-1)/p},
\]
that $\vartheta_q,\eta_{q,p}<q-1$ for $q>4$, and that
$B_p=\sum_{r=3}^{p} r\StirlingTwo{p}{r}$.

\begin{theorem}[Full-space bounds for $e_{p-1}$]\label{thm:fullspace}
Let $q=p^k>4$, with $p$ an odd prime.
\begin{itemize}
\item[(i)] If $A\subseteq\F_q^n$ is strongly $e_{p-1}$-free of length $p$,
then
\[
|A| \;\leq\; p\sum_{w=0}^{n}\binom{n}{w}
M_{q-1}\!\left(w,\left\lfloor\frac{(q-1)w}{p}\right\rfloor\right)
\;\leq\; p\,\bigl(1+\eta_{q,p}\bigr)^{n},
\qquad 1+\eta_{q,p}<q.
\]
\item[(ii)] If $A\subseteq\F_q^n$ contains no $p$ pairwise distinct elements
annihilating $e_{p-1}$ coordinatewise, then
\[
|A| \;\leq\; \sum_{w=0}^{n}\binom{n}{w}\sum_{r=3}^{p} r\StirlingTwo{p}{r}
M_{q-1}\!\left(w,\left\lfloor\frac{(q-1)w}{r}\right\rfloor\right)
\;\leq\; B_p\,\bigl(1+\vartheta_q\bigr)^{n},
\qquad 1+\vartheta_q<q.
\]
\item[(iii)] Consequently,
\[
\EGZ\bigl(p,\F_q^n,p-1\bigr) \;\leq\; (p-1)\,B_p\,\bigl(1+\vartheta_q\bigr)^{n}+1.
\]
\end{itemize}
\end{theorem}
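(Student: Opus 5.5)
The plan is to decompose $A=\bigsqcup_{P\subseteq[n]}A_P$ according to supports and bound each stratum with the toric results. For a stratum with $|P|=w$, Lemma~\ref{lem:stratification} identifies $A_P$ injectively with a subset of $(\F_q^\times)^P\cong(\F_q^\times)^w$. This identification preserves constancy and pairwise distinctness, and under it the coordinatewise zeros of $e_{p-1}$ correspond exactly. We will need $p-1\ge1$, which holds.

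For (i), each $A_P$ is then strongly $e_{p-1}$-free of length $p$ relative to the torus. Corollary~\ref{cor:toric-epminus1}(i), applied in dimension $w$, gives
\[
|A_P|\le pM_{q-1}\bigl(w,\lfloor (q-1)w/p\rfloor\bigr)\le p\,\eta_{q,p}^{\,w}.
\]
The degenerate stratum $w=0$ is just $\{0\}$, and it satisfies the bound because $M_{q-1}(0,0)=1$. Summing over the $\binom nw$ supports of size $w$ gives the first inequality. Summing $\binom nw\eta_{q,p}^w$ by the binomial theorem gives $p(1+\eta_{q,p})^n$. Finally, $\eta_{q,p}<q-1$ gives $1+\eta_{q,p}<q$.

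Part (ii) follows the same way. Each $A_P$ is distinctly $e_{p-1}$-free relative to the torus, so Theorem~\ref{thm:toric-zero-sum} applies through the inversion of Corollary~\ref{cor:toric-epminus1}(ii). It yields the displayed sum over $r\ge3$ with $M_{q-1}(w,\lfloor (q-1)w/r\rfloor)\le\vartheta_q^w$. Summing over strata then gives $B_p(1+\vartheta_q)^n$.

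The one point to check is the small strata. For $w=0$ the bound $B_p\ge1$ trivially covers $|A_P|\le1$. For small $w$, the inequality $M_{q-1}(w,D)\le\vartheta_q^w$ still follows from the generating-function argument, which is valid for every $w\ge0$. We must also confirm that the intermediate, un-inverted form is legitimate: inversion is a bijection of $(\F_q^\times)^w$ and maps distinct $e_{p-1}$-free sets to distinct $e_1$-free sets, so the first inequality of Theorem~\ref{thm:toric-zero-sum} transfers verbatim.

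For (iii), take a sequence $S$ in $\F_q^n$ with no length-$p$ subsequence on which $e_{p-1}$ vanishes, and let $A$ be its set of distinct values. No value can occur $p$ times, since $e_{p-1}(a,\dots,a)=\binom{p}{p-1}a^{p-1}=pa^{p-1}=0$. Hence $|S|\le(p-1)|A|$. Moreover, $A$ contains no $p$ pairwise distinct annihilating elements, because choosing one occurrence of each would give a forbidden subsequence. Part (ii) then bounds $|S|\le(p-1)B_p(1+\vartheta_q)^n$, and the EGZ constant is at most this plus one.

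I expect no serious obstacle. The only care needed is in the bookkeeping that the stratification preserves distinctness, which Lemma~\ref{lem:stratification} already provides, and in the binomial summation.
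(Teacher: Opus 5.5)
Your proposal is correct and follows the paper's proof essentially step for step. You stratify $A$ by support, apply Lemma~\ref{lem:stratification} together with the toric bounds of Corollary~\ref{cor:toric-epminus1}, using for (ii) the un-inverted first inequality of Theorem~\ref{thm:toric-zero-sum} transported by inversion. You then sum over strata with the binomial theorem and pass to sequences using that each value occurs at most $p-1$ times; your explicit handling of the small strata $w=0$ is a harmless check that the paper leaves implicit.
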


\begin{proof}
(i) By Lemma~\ref{lem:stratification}(i), for every $P\subseteq[n]$ the
stratum $A_P$, viewed inside the torus of dimension $|P|$, is strongly
$e_{p-1}$-free of length $p$ relative to the torus. By Corollary~\ref{cor:toric-epminus1}(i),
\[
|A_P| \;\leq\; p\, M_{q-1}\!\left(|P|,
\left\lfloor\frac{(q-1)|P|}{p}\right\rfloor\right)
\;\leq\; p\,\eta_{q,p}^{\,|P|}.
\]
Summing over all $P\subseteq[n]$ and grouping by $w=|P|$ gives the first
inequality; the second follows from the binomial theorem, since
$\sum_{w}\binom{n}{w}\eta_{q,p}^{\,w}=(1+\eta_{q,p})^{n}$. Finally
$1+\eta_{q,p}<q$ because $\eta_{q,p}<q-1$.

(ii) Identical, using Lemma~\ref{lem:stratification}(ii) and the first bound in Theorem~\ref{thm:toric-zero-sum}, transported through coordinatewise inversion as in Corollary~\ref{cor:toric-epminus1}(ii), on each stratum, together with the estimate
$M_{q-1}(w,\lfloor (q-1)w/r\rfloor)\leq
M_{q-1}(w,\lfloor (q-1)w/3\rfloor)\leq\vartheta_q^{\,w}$ for every $r\geq 3$.

(iii) Let $S$ be a sequence over $\F_q^n$ with no subsequence of length $p$
annihilating $e_{p-1}$. Since
\[
e_{p-1}(a,\dots,a)=\binom{p}{p-1}a^{p-1}=p\,a^{p-1}=0
\]
in characteristic $p$ for every $a\in\F_q^n$.  {Let $A$ be the set of distinct values appearing in $S$.  Since no value occurs $p$ times, $|A|\geq |S|/(p-1)$.} If $A$
contained $p$ pairwise distinct elements annihilating $e_{p-1}$, choosing one
occurrence of each in $S$ would give a forbidden subsequence. Hence $A$
satisfies the hypothesis of (ii), and $|S|\leq (p-1)B_p(1+\vartheta_q)^n$.  {Thus every sequence with length greater than
\[
 (p-1)B_p(1+\vartheta_q)^n
\]
contains the required subsequence.  Consequently,
\[
 \EGZ(p,\F_q^n,p-1)
 \le \left\lfloor (p-1)B_p(1+\vartheta_q)^n\right\rfloor+1
 \le (p-1)B_p(1+\vartheta_q)^n+1.
\]
This proves~\textup{(iii)}.}
\end{proof}

For $q=p=5$ this turns the trivial exponential base furnished by
\cite[Theorem~4.3]{CostaDellaFiore} into a strict full-space bound.

\begin{corollary}[The case $q=p=5$]\label{cor:q5}
Every subset of $\F_5^n$ containing no five pairwise distinct elements
annihilating $e_4$ coordinatewise has cardinality at most
$120\,(1+\vartheta_5)^n$, and
\[
\EGZ\bigl(5,\F_5^n,4\bigr)\;\leq\; 480\,\bigl(1+\vartheta_5\bigr)^{n}+1,
\qquad 1+\vartheta_5\approx 4.9556902 \,<\, 5.
\]
Moreover, every strongly $e_4$-free subset of $\F_5^n$ of length five has
cardinality at most
\[
5\,\bigl(1+\eta_{5,5}\bigr)^{n},
\qquad 1+\eta_{5,5}\approx 4.2563617 \,<\, 5.
\]
\end{corollary}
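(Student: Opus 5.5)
This is a direct specialization of Theorem~\ref{thm:fullspace} to $p=q=5$, i.e.\ $k=1$. The hypothesis $q=p^k>4$ with $p$ an odd prime holds, so all three parts of that theorem apply. The work is to evaluate the constants and check the numerical claims.

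\emph{Distinct-variable set bound and the EGZ bound.} Part~(ii) of Theorem~\ref{thm:fullspace} bounds a subset of $\F_5^n$ with no five pairwise distinct coordinatewise zeros of $e_4$ by $B_5(1+\vartheta_5)^n$. Using the Stirling numbers $\StirlingTwo{5}{3}=25$, $\StirlingTwo{5}{4}=10$, $\StirlingTwo{5}{5}=1$, we get
\[
B_5=3\cdot25+4\cdot10+5\cdot1=120,
\]
as already computed in Corollary~\ref{cor:toric-classical-consequences}. Part~(iii) then gives
\[
\EGZ(5,\F_5^n,4)\le(p-1)B_5(1+\vartheta_5)^n+1=480(1+\vartheta_5)^n+1 .
\]

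\emph{Strong-freeness bound.} Part~(i) of Theorem~\ref{thm:fullspace} with $p=5$ gives $|A|\le5(1+\eta_{5,5})^n$ directly.

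\emph{Numerical constants.} Here
\[
\vartheta_5=\inf_{0<\rho<1}(1+\rho+\rho^2+\rho^3)\rho^{-4/3},
\qquad
\eta_{5,5}=\inf_{0<\rho<1}(1+\rho+\rho^2+\rho^3)\rho^{-4/5}.
\]
The strict inequalities $1+\vartheta_5<5$ and $1+\eta_{5,5}<5$ need no computation. They follow from $\vartheta_q,\eta_{q,p}<q-1$, which holds because $(q-1)/3$ and $(q-1)/p$ are both below $(q-2)/2$ when $q=5$: indeed $4/3<3/2$ and $4/5<3/2$. The decimal values come from the one-variable minimizations, already reported in Corollary~\ref{cor:toric-classical-consequences} ($\vartheta_5\approx3.9556902$) and Example~\ref{ex:toric-e4-five} ($\eta_{5,5}\approx3.2563617$).

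The only potential obstacle is certifying these decimals. To do so, set the logarithmic derivative of each function to zero, which gives a cubic equation in $\rho$, and evaluate at its root in $(0,1)$. The inequalities themselves do not depend on this numerical step.
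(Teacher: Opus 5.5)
Your proposal is correct and matches the paper, which obtains this corollary by specializing Theorem~\ref{thm:fullspace} to $p=q=5$. It uses $B_5=120$, the factor $(p-1)B_5=480$, and the values of $\vartheta_5$ and $\eta_{5,5}$ computed in Corollary~\ref{cor:toric-classical-consequences} and Example~\ref{ex:toric-e4-five}. Your observation that $1+\vartheta_5<5$ and $1+\eta_{5,5}<5$ follow from $4/3<3/2$ and $4/5<3/2$ via the derivative argument, independently of the decimals, is also the paper's justification.
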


\begin{remark}
The strong bound in Theorem~\ref{thm:fullspace}(i) and
Corollary~\ref{cor:q5} lies outside the range of Corollary~\ref{thm:strong-em},
since $t=p\leq 2(p-1)=2m$: no nontrivial exponential bound for strongly $e_{p-1}$-free subsets of
the full space $\F_q^n$ was available from the degree method.
\end{remark}

\begin{remark}\label{rem:comparison-previous}
Theorem~\ref{thm:fullspace}(iii) should be compared with
\cite[Theorem~4.3]{CostaDellaFiore}, which gives
\[
\EGZ(p,\F_q^n,p-1)\leq (u_{p,k}+1)^{n+o(n)},
\]
where $u_{3,k}=2.756^k$ for $k\geq2$ and
$u_{p,k}=(2\sqrt p)^k$ for $p\neq3$, $p^k\geq7$, and also provides sharper computational bounds for $q=3^k$, $2\leq k\leq5$, by optimizing the entropy of the exact coefficient support.

Both approaches use support stratification and coordinatewise inversion. In
\cite{CostaDellaFiore}, however, each inverted stratum is treated inside the ambient additive group, whereas here the restricted alphabet is retained and Theorem~\ref{thm:toric-zero-sum} is applied on the torus. This gives a bound uniform in the size of the stratum, without a cutoff or an $o(n)$-loss, but it is not uniformly sharper. For $q=9$, the resulting full-space base
\[
1+\vartheta_9\approx8.4810579
\]
is larger than the rounded computational base $8.315$ from
\cite{CostaDellaFiore}.  As noted in Remark~\ref{rem:q3-dual-certificate}, the separate direct slice-rank analysis of Section~\ref{sec:q3-gap} gives $C_2\approx8.311438619$, agreeing with the underlying field-specific numerical optimization to the precision reported there; the weaker value $1+\vartheta_9$ belongs specifically to the toric-stratification route discussed in this remark.  In the full-space setting, the main qualitative gain occurs for $q=p=5$, where the present method yields a nontrivial exponential bound while the previous general closed-form estimate was trivial.  For $q=p=3$, the toric reduction gives only the trivial base $3$, and, to the best of our knowledge, it remains open whether there exists $c<3$ such that
\[
 \EGZ(3,\F_3^n,2)\le c^{n+o(n)}.
\]
\end{remark}

\section*{Declaration of generative AI assistance}

During the preparation of this work, the authors used OpenAI's ChatGPT and Anthropic's Claude as interactive aids in exploring possible approaches to mathematical and expository questions arising during the development of the paper.  All AI-assisted suggestions were critically assessed, and all arguments and statements appearing in the manuscript were independently verified by the authors, who take full responsibility for its content.


\begin{thebibliography}{99}


\bibitem{CaroSchmitt}
Y.~Caro and J.~R.~Schmitt,
\emph{Higher degree Erd\H{o}s--Ginzburg--Ziv constants},
Integers \textbf{22} (2022), Paper No.~A102, 17 pp.


\bibitem{CoverThomas}
T.~M.~Cover and J.~A.~Thomas,
\emph{Elements of Information Theory}, second ed.,
Wiley-Interscience, Hoboken, NJ, 2006;
\href{https://doi.org/10.1002/047174882X}{doi:10.1002/047174882X}.


\bibitem{CostaDellaFiore}
S.~Costa and S.~Della Fiore,
\emph{Bounds on the higher degree Erd\H{o}s--Ginzburg--Ziv constants over $\F_q^n$},
Arch. Math. (Basel) \textbf{122} (2024), 17--29;
\href{https://doi.org/10.1007/s00013-023-01916-4}{doi:10.1007/s00013-023-01916-4};
arXiv:2211.03682v2, containing the qualitative slice-rank result recalled in Proposition~\ref{prop:q3-support-gap}.

\bibitem{CrootLevPach}
E.~Croot, V.~F.~Lev, and P.~Pach,
\emph{Progression-free sets in $\mathbb Z_4^n$ are exponentially small},
Ann. of Math. (2) \textbf{185} (2017), 331--337.

\bibitem{EllenbergGijswijt}
J.~S.~Ellenberg and D.~Gijswijt,
\emph{On large subsets of $\F_q^n$ with no three-term arithmetic progression},
Ann. of Math. (2) \textbf{185} (2017), 339--343.

\bibitem{FoxSauermann}
J.~Fox and L.~Sauermann,
\emph{Erd\H{o}s--Ginzburg--Ziv constants by avoiding three-term arithmetic progressions},
Electron. J. Combin. \textbf{25} (2018), Paper No.~P2.14;
\href{https://doi.org/10.37236/7275}{doi:10.37236/7275}.


\bibitem{Lucas1878}
\'E.~Lucas,
\emph{Sur les congruences des nombres eul\'eriens et des coefficients diff\'erentiels des fonctions trigonom\'etriques suivant un module premier},
Bull. Soc. Math. France \textbf{6} (1878), 49--54;
\href{https://archive.numdam.org/item/BSMF_1878__6__49_1/}{Numdam}.

\bibitem{NaslundEGZ}
E.~Naslund,
\emph{Exponential bounds for the Erd\H{o}s--Ginzburg--Ziv constant},
J. Combin. Theory Ser. A \textbf{174} (2020), 105185;
\href{https://doi.org/10.1016/j.jcta.2019.105185}{doi:10.1016/j.jcta.2019.105185}.

\bibitem{NaslundPartition}
E.~Naslund,
\emph{The partition rank of a tensor and $k$-right corners in $\F_q^n$},
J. Combin. Theory Ser. A \textbf{174} (2020), 105190;
\href{https://doi.org/10.1016/j.jcta.2019.105190}{doi:10.1016/j.jcta.2019.105190}.


\bibitem{OmarPartition}
M.~Omar,
\emph{Partition rank and partition lattices},
Order \textbf{42} (2025), 371--388;
\href{https://doi.org/10.1007/s11083-024-09685-7}{doi:10.1007/s11083-024-09685-7}.



\bibitem{SauermannDistinct}
L.~Sauermann,
\emph{On the size of subsets of $\F_p^n$ without $p$ distinct elements summing to zero},
Israel J. Math. \textbf{243} (2021), 63--79.

\bibitem{TaoBlog}
T.~Tao,
\emph{A symmetric formulation of the Croot--Lev--Pach--Ellenberg--Gijswijt capset bound},
What's new, 18 May 2016;
\href{https://terrytao.wordpress.com/2016/05/18/a-symmetric-formulation-of-the-croot-lev-pach-ellenberg-gijswijt-capset-bound/}{blog post}.

\bibitem{TaoSawin}
T.~Tao and W.~Sawin,
\emph{Notes on the slice rank of tensors},
What's new, 24 August 2016;
\href{https://terrytao.wordpress.com/2016/08/24/notes-on-the-slice-rank-of-tensors/}{joint blog post}.

\end{thebibliography}
\end{document}